\definecolor{citegreen}{rgb}{0,0.6,0}
\definecolor{refred}{rgb}{0.8,0,0}
\theoremstyle{plain}
\newtheorem{thm}{Theorem}[section]
\newtheorem{prop}[thm]{Proposition}
\newtheorem{cor}[thm]{Corollary}
\newtheorem{ackn}{Acknowledgement\!}
\newtheorem{thmm}{Theorem\!}
\theoremstyle{definition}
\newtheorem{defn}[thm]{Definition}
\newtheorem{conge}[thm]{Conjecture}
\theoremstyle{remark}
\newtheorem{rem}[thm]{Remark}
\numberwithin{equation}{section}
\def\Ri{\R^{n+1}}
\def\eps{\varepsilon}
\def\Ric{{\mathrm {Ric}}}
\def\R{\mathbb R}
\def\R{{{\mathbb R}}}
\def\NN{\mathbb N}
\newcommand{\intbar}{\etaathop{\int\etaakebox(-13.5,0){\rule[4pt]{.7em}{0.3pt}}
\kern-6pt}\nolimits}
\newcommand{\be}{\begin{equation}}
\newcommand{\ee}{\end{equation}}
\newcommand{\bea}{\begin{equation*}}
\newcommand{\eea}{\end{equation*}}
\newcommand{\Hess}{\mathrm{Hess}}
\begin{document}

\title{Semilinear Li~\&~Yau inequalities}

\author{Daniele Castorina}
\address{Daniele Castorina\\
	Dipartimento di Matematica e Applicazioni,
	Universit\`a di Napoli, Via Cintia, Monte S. Angelo 80126 Napoli,
	Italy}
\email{daniele.castorina@unina.it}

\author{Giovanni Catino}
\address{Giovanni Catino\\
	Dipartimento di Matematica,
	Politecnico di Milano, Piazza Leonardo da Vinci 32, 20133, Milano,
	Italy}
\email{giovanni.catino@polimi.it}

\author{Carlo Mantegazza}
\address{Carlo Mantegazza\\
	Dipartimento di Matematica e Applicazioni,
	Universit\`a di Napoli, Via Cintia, Monte S. Angelo 80126 Napoli,
	Italy}
\email{c.mantegazza@sns.it}

\begin{abstract} We derive an adaptation of Li~\&~Yau estimates for positive solutions of semilinear heat equations on Riemannian manifolds with nonnegative Ricci tensor. We then apply these estimates to obtain a Harnack inequality and to discuss monotonicity, convexity, decay estimates and triviality of ancient and eternal solutions. 
\end{abstract}

\subjclass[2010]{35K05, 58J35}

\maketitle

\section{Introduction}

We show some Li~\&~Yau estimates (see~\cite{liyau}) for positive solutions of semilinear heat equations $u_t = \Delta u + u^p$ in $\R^n$, or in a complete Riemannian manifold $(M,g)$ with nonnegative Ricci tensor, where $p>0$ and we derive an associated Harnack inequality. Similar results can be found in the papers~\cite{jiayuli,wuyang}.

We will then discuss the application of these inequalities and methods to the analysis of positive classical {\em ancient} and {\em eternal} solutions.
\noindent We call a solution
\begin{itemize}
\item {\em ancient} if it is defined in $M\times(-\infty,T)$ for some $T\in\R$,

\item {\em immortal} if it is defined in $M\times(T,+\infty)$ for some $T\in\R$,

\item {\em eternal} if it is defined in $M\times\R$.
\end{itemize}
Furthermore, we call a solution $u$ {\em static} if is independent of time (\textit{i.e. }$u(x,t)=u(x)$, hence it satisfies $\Delta u + u^p=0$); {\em trivial} if it is constant in space (that is, $u(x,t)=u(t)$ and solves the ODE $u'=u^p$). We say that $u$ is simply {\em constant} if it is constant in space and time.\\
Notice that positive ancient (trivial) solutions always exist (by solving the corresponding ODE), while being eternal is quite restrictive: for instance, by an easy maximum principle argument, if $M$ is compact and $u$ is eternal then $u\equiv0$ (see~\cite[Corollary~2.2]{cama2}, for instance). In the noncompact situation, this is not true: the following ``Talenti's  function''~\cite{tal}
$$
u(x)=\frac{24}{\bigl(1+|x|^2\bigr)^2}
$$
on $\R^6$ satisfies $\Delta u + u^2 = 0$, in particular $u$ is a nonzero eternal solution of $u_t=\Delta u + u^2$.

A reason for the interest in ancient or eternal solutions is that they typically arise as blow--up limits when the solutions of semilinear parabolic equations (in bounded intervals) develop a singularity at a certain time (i.e. the solution becomes unbounded). They also appear naturally and play a key role in the analysis of mean curvature flow and of Ricci flow (from which we get several suggestions), which are also described by (much more complicated systems of) parabolic PDEs. In such cases, the solutions are respectively, evolving hypersurfaces and abstract Riemannian manifolds. Analyzing their properties and eventually classifying them lead to understand the behavior of the solutions close to the singularity or even (in very lucky situations, notably the motion by curvature of embedded curves in the plane and the 2--dimensional Ricci flow) to actually {\em exclude} the formation (existence) of singularities. Moreover, this analysis can also be used to get uniform (universal) estimates on the ``rate'' a solution (or some related quantity) becomes unbounded at a singularity. Indeed (roughly speaking), typically a ``faster'' rate implies that performing a blow--up at a singularity, we obtain a bounded, nonzero, nonconstant, eternal solution, while with the slower ``standard'' natural rate, we get an ancient solution (immortal solutions are usually less significant). This has been done, for example, by  Pol\'a\v cik, Quittner and Souplet through "universal estimates" for semilinear equations in $\R^n$ (see~\cite{pqs1,pqs2}) or by Hamilton through a technique of "smart point picking", which is more suitable for geometric flows (see~\cite{hamilton9}).\\
For instance, excluding the existence of bounded, positive, nonconstant, eternal solutions to the  equation $u_t = \Delta u + u^p$ in $\R^n\times\R$, we have a (universal, up to a constant) $L^\infty$ bound from above on every solution, approaching the singular time $T$.\\
We will see how from the Li~\&~Yau estimates (holding for the exponent $p$ in a suitable range), one can exclude the existence of such eternal solutions. Instead, unfortunately, even if we obtain some strong properties (pointwise monotonicity and convexity in time) of ancient solutions on Riemannian manifolds with nonnegative Ricci tensor (for suitable exponents $p$), we are still not able to show the ``natural'' conjecture that they are trivial, constant in space, as it is known when the ambient is $\R^n$, by a result of Merle and Zaag~\cite{merlezaag}.

There is a quite large literature on the subject, for a rather complete account we refer the interested reader to the paper of Souplet and Zhang~\cite{souzha} as well as the book of Quittner and Souplet ~\cite{qsbook}. Very important developments for the Euclidean case (some of them very recent) are obtained in~\cite{pqs2,quittner,quittner2}, while some extensions to the case of Riemannian manifolds can be found in~\cite{jiayuli,wuyang,zhugrad}.

We mention that all the conclusions also hold for ancient solutions of $u_t = \Delta u + |u|^p$, without assuming their positivity, by the following theorem.

\begin{thmm}[Theorem~2.6 in~\cite{cama2}]
Let $(M,g)$ be a complete Riemannian manifold with Ricci tensor  uniformly bounded below and let $u$ be an ancient solutions of $u_t = \Delta u + |u|^p$, then either $u\equiv 0$ or $u>0$ everywhere.
\end{thmm}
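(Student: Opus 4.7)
The plan is to prove the dichotomy in two stages: first show that $u\ge 0$ throughout $M\times(-\infty,T)$, and then show that any interior zero of $u$ forces $u\equiv 0$. The uniform Ricci lower bound will enter through a parabolic maximum principle on the noncompact manifold $M$.

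For the first stage, I would argue by contradiction. Suppose $u(x_0,t_0)<0$ at some point and set $w:=-u$, which is positive at $(x_0,t_0)$ and, on the open set $\{u<0\}$, satisfies the dissipative equation $w_t=\Delta w-w^p$. Define $M(t):=\sup_{x\in M}w(x,t)$, so that $M(t_0)>0$. At an interior spatial maximum of $w(\cdot,t)$ one has $\Delta w\le 0$ and $\nabla w=0$, giving $M'(t)\le -M(t)^p$. In the noncompact setting the supremum need not be attained, and this is the point at which the uniform Ricci lower bound enters: the parabolic Omori--Yau maximum principle (equivalently, a Calabi-type cutoff based on the squared distance function, whose Laplacian is controlled via the Ricci bound) still yields the differential inequality $M'(t)\le -M(t)^p$ in an upper Dini sense. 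Comparing backwards in time with the ODE $\varphi'=-\varphi^p$, $\varphi(t_0)=M(t_0)$, one obtains $M(t)\ge\varphi(t)$ for $t\le t_0$, and for $p>1$ the comparison function diverges at the finite past time $t_0-M(t_0)^{1-p}/(p-1)$. This contradicts $u$ being a classical solution on all of $M\times(-\infty,T)$.

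Once $u\ge 0$ is established, the equation reads $u_t-\Delta u=u^p\ge 0$, so $u$ is a nonnegative supersolution of the heat equation. If $u(x_1,t_1)=0$ at some interior point, the parabolic strong minimum principle on a complete manifold with Ricci bounded below forces $u\equiv 0$ on the whole parabolic past $M\times(-\infty,t_1]$. Forward uniqueness for the Cauchy problem associated to $u_t=\Delta u+u^p$ then extends this to $u\equiv 0$ throughout the domain, contradicting $u\not\equiv 0$. The main technical obstacle is the first stage: rigorously justifying the differential inequality $M'(t)\le -M(t)^p$ when the spatial supremum of $w(\cdot,t)$ is not attained, which is exactly where the assumption of a uniform Ricci lower bound becomes essential. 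Once this inequality is in place, the backward ODE comparison together with the strong minimum principle deliver the conclusion.
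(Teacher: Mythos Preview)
The paper does not prove this theorem; it is quoted from \cite{cama2} (as Theorem~2.6 there) in the introduction and used as a black box to pass from ancient solutions of $u_t=\Delta u+|u|^p$ to positive ones. There is therefore no proof in the present paper against which your proposal can be compared.

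On the proposal itself, the two-stage outline (first $u\ge 0$ via a backward ODE blow-up, then the strong minimum principle) is natural, and the second stage is essentially standard. Two points in the first stage would need more care. First, your backward blow-up of $\varphi'=-\varphi^p$ relies on $p>1$; for $0<p\le 1$ the comparison solution stays finite on every bounded past interval, so no contradiction arises, yet the paper applies the theorem across the whole range $p\in(0,\overline{p}_n)$ (see Section~\ref{anci} and Remark~\ref{remp1}). Second, the Omori--Yau step presupposes that $M(t)=\sup_{x\in M}(-u)(x,t)$ is finite, which is not part of the hypotheses for a merely classical solution on a noncompact manifold; without this the inequality $M'(t)\le -M(t)^p$ is vacuous, and a localized or cut-off version of the argument would be needed to make it rigorous.
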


{\em In the whole paper the Riemannian manifolds $(M,g)$ will be smooth, complete, connected and without boundary. We will denote with $\Delta$ the associated Laplace--Beltrami operator. As it is standard, we will write $\Ric\geq0$ and we will say that the Ricci tensor is nonnegative with the meaning that all its eigenvalues are nonnegative. Finally, all the solutions we will consider are classical, $C^2$ in space and $C^1$ in time.}

\begin{ackn} We would like to thank Giacomo Ascione for his help with Mathematica\textsuperscript{TM}.
\end{ackn}

\section{Li~\&~Yau--type estimates for semilinear heat equations}\label{liyausem}

We now prove some Li~\&~Yau--type estimates for positive solutions of the semilinear heat equation $u_t = \Delta u + u^p$ with $p>1$, on a complete $n$--dimensional Riemannian manifold $(M,g)$ with nonnegative Ricci tensor $\Ric$.

For the sake of simplicity and more clarity we first assume that $M$ is compact.

Let $u:M\times[0,T)\to\R$ be a positive solution of $u_t = \Delta u + u^p$, then setting 
$f=\log u$, we have
$$
\vert\nabla f\vert=\frac{\vert\nabla u\vert}{u}\qquad\qquad \Delta f=\frac{\Delta u}{u}-\frac{\vert\nabla u\vert^2}{u^2}=\frac{\Delta u}{u}-\vert\nabla f\vert^2
$$
$$
f_t=\frac{u_t}{u}=\frac{\Delta u}{u}+u^{p-1}=\Delta f+\vert\nabla f\vert^2+e^{(p-1)f}.
$$
We consider $\alpha,\beta\in[0,1]$ and after defining
\begin{align*}
F=&\,t(f_t-\alpha \vert\nabla f\vert^2-\beta e^{f(p-1)})\\
=&\,t\bigl(\frac{u_t}{u}-\alpha\frac{\vert\nabla u\vert^2}{u^2}-\beta u^{p-1}\bigr)\\
=&\,t\bigl(\frac{\Delta u}{u}-\alpha\frac{\vert\nabla u\vert^2}{u^2}+(1-\beta)u^{p-1}\bigr)\\
=&\,t(\Delta \log u+(1-\alpha)\frac{\vert\nabla u\vert^2}{u^2}+(1-\beta)u^{p-1}\bigr)\\
=&\,t(\Delta f+(1-\alpha)\vert\nabla f\vert^2+(1-\beta)e^{(p-1)f})
\end{align*}
we compute, for $t>0$,
\begin{align*}
(\partial_t-\Delta)F=&\,F/t+t(\partial_t-\Delta)\Delta f+t(p-1)(1-\beta)[(\partial_t-\Delta)f]e^{(p-1)f}\\
&\,-t(p-1)^2(1-\beta)\vert \nabla f\vert^2 e^{(p-1)f}+2t(1-\alpha)\nabla f\nabla f_t -t(1-\alpha)\Delta\vert\nabla f\vert^2\\
=&\,F/t+t\bigl\{\Delta (\vert\nabla f\vert^2+e^{(p-1)f})
+(p-1)(1-\beta)(\vert\nabla f\vert^2+e^{(p-1)f})e^{(p-1)f}\\
&\,-(p-1)^2(1-\beta)\vert \nabla f\vert^2 e^{(p-1)f}+2(1-\alpha)\nabla f\nabla f_t -(1-\alpha)\Delta\vert\nabla f\vert^2\bigr\}\\
=&\,F/t+t\bigl\{(p-1)\Delta fe^{(p-1)f}+(p-1)^2\vert\nabla f\vert^2 e^{(p-1)f}+(p-1)(1-\beta)e^{2f(p-1)}\\
&\,+(p-1)(1-\beta)\vert\nabla f\vert^2e^{(p-1)f}-(p-1)^2(1-\beta)\vert \nabla f\vert^2 e^{(p-1)f}\\
&\,+2(1-\alpha)\nabla f\nabla[\Delta f+\vert\nabla f\vert^2+e^{(p-1)f}]+\alpha\Delta\vert\nabla f\vert^2\bigr\}\\
=&\,F/t+t\bigl\{(p-1)Fe^{(p-1)f}/t+(p-1)[\alpha+\beta(p-2)]\vert \nabla f\vert^2 e^{(p-1)f}\\
&\,+2(1-\alpha)\nabla f\nabla[\Delta f+\vert\nabla f\vert^2+e^{(p-1)f}]+2\alpha\vert\Hess\, f\vert^2+2\alpha\nabla f\Delta\nabla f\bigr\}.
\end{align*}
By the nonnegativity of the Ricci tensor there holds $\nabla f\Delta\nabla f\geq \nabla f\nabla \Delta f $ and using the inequality $\vert\Hess\, f\vert^2\geq(\Delta f)^2/n$, we have, as 
$$
\Delta f=F/t-(1-\alpha)\vert\nabla f\vert^2-(1-\beta)e^{(p-1)f},
$$
\begin{align}
(\partial_t-\Delta)F
\geq &\,F/t+t\bigl\{2\alpha(\Delta f)^2/n+(p-1)Fe^{(p-1)f}/t+(p-1)[\alpha+\beta(p-2)]\vert \nabla f\vert^2 e^{(p-1)f}\nonumber\\
&\,+2(1-\alpha)\nabla f\nabla[\Delta f+\vert\nabla f\vert^2+e^{(p-1)f}]+2\alpha\nabla f\nabla\Delta f\bigr\}\nonumber\\
=&\,F/t+t\bigl\{2\alpha(\Delta f)^2/n+(p-1)Fe^{(p-1)f}/t+(p-1)[\alpha+\beta(p-2)]\vert \nabla f\vert^2 e^{(p-1)f}\nonumber\\
&\,+2\nabla f\nabla[\Delta f+(1-\alpha)\vert\nabla f\vert^2+(1-\alpha)e^{(p-1)f}]\bigr\}\nonumber\\
=&\,F/t+t\bigl\{2\alpha(\Delta f)^2/n+(p-1)Fe^{(p-1)f}/t+2\nabla f\nabla F/t\nonumber\\
&\,+(p-1)(\beta p-\alpha)\vert \nabla f\vert^2 e^{(p-1)f}\bigr\}\nonumber\\
=&\,F/t+t\bigl\{2\alpha F^2/nt^2+2\alpha(1-\beta)^2e^{2f(p-1)}/n+[(p-1)-4\alpha(1-\beta)/n]Fe^{(p-1)f}/t\nonumber\\
&\,+2\alpha(1-\alpha)^2\vert\nabla f\vert^4/n-4\alpha(1-\alpha)\vert\nabla f\vert^2F/nt\\
&\,+2\nabla f\nabla F/t+[(p-1)(\beta p-\alpha)+4\alpha(1-\alpha)(1-\beta)/n]\vert \nabla f\vert^2 e^{(p-1)f}\bigr\}.\label{eq960}
\end{align}
Hence, at a point where $F\leq 0$ we conclude
\begin{align*}
(\partial_t-\Delta)F\geq &\,F/t+2\alpha F^2/nt+2t\alpha(1-\beta)^2e^{2f(p-1)}/n+[(p-1)-4\alpha(1-\beta)/n]Fe^{f(p-1)}\\
&\,+2\nabla f\nabla F+t[(p-1)(\beta p-\alpha)+4\alpha(1-\alpha)(1-\beta)/n]\vert \nabla f\vert^2 e^{(p-1)f}\\
\geq &\,F/t+2\alpha F^2/nt+2t\alpha(1-\beta)^2e^{2f(p-1)}/n+[(p-1)-4\alpha(1-\beta)/n]Fe^{(p-1)f}\\
&\,+2\nabla f\nabla F,
\end{align*}
provided that
\begin{equation}\label{cond2}
(p-1)(\beta p-\alpha)+4\alpha(1-\alpha)(1-\beta)/n\geq 0.
\end{equation}
Now we deal with the sum
$$
2t\alpha(1-\beta)^2w^2/n+[(p-1)-4\alpha(1-\beta)/n]Fw,
$$
where we set $w=e^{(p-1)f}\geq 0$, and we see that the minimum of this quadratic in $w$ is given by
$$
-F^2\frac{[(p-1)-4\alpha(1-\beta)/n]^2n}{8t\alpha(1-\beta)^2}=
-F^2\frac{[n(p-1)-4\alpha(1-\beta)]^2}{8nt\alpha(1-\beta)^2}
$$
Hence, after substituting in the above formula and using Hamilton's trick (see~\cite{hamilton2} or~\cite[Lemma~2.1.3]{Manlib}), being $M$ compact we get
\begin{align*}
\frac{d\,}{dt} F_{\min}(t)\geq
&\,\frac{F_{\min}(t)}{t}+\frac{F^2_{\min}(t)}{nt}\left[2\alpha-\frac{[n(p-1)-4\alpha(1-\beta)]^2}{8\alpha(1-\beta)^2}\right]\\
=&\,\,\frac{F_{\min}(t)}{t}+F^2_{\min}(t)\frac{n(p-1)}{8t\alpha(1-\beta)^2}\biggl[1+\frac{8\alpha(1-\beta)}{n}-p\biggr]\\
=&\,\,\frac{F_{\min}(t)}{t}+\frac{\varepsilon F^2_{\min}(t)}{t},
\end{align*}
at almost every $t>0$, where $F_{\min}\leq 0$ and we set
\begin{equation}\label{epseq}
\varepsilon=\varepsilon(n,p,\alpha,\beta)=\frac{n(p-1)}{8\alpha(1-\beta)^2}\biggl[1+\frac{8\alpha(1-\beta)}{n}-p\biggr].
\end{equation}
If now $\varepsilon$ is positive, that is,
\begin{equation}\label{cond1}
p< 1+\frac{8\alpha(1-\beta)}{n},
\end{equation}
when $F_{\min}\leq -1/\varepsilon$, we have $\frac{d\,}{dt}F_{\min}(t)>0$. Since $F_{\min}(0)=0$, this implies (``integrating'' this differential inequality) that
$F_{\min}\geq -1/\varepsilon$, hence
\begin{equation}
(f_t-\alpha \vert\nabla f\vert^2-\beta e^{(p-1)f})(x,t)=\frac{F(x,t)}{t}\geq F_{\min}(t)\geq -\frac{1}{\varepsilon t}\label{eq970}
\end{equation}
for every $x\in M$ and $t>0$. That is,
$$
\frac{u_t}{u}\geq \alpha \frac{\vert\nabla u\vert^2}{u^2}+\beta u^{p-1}-\frac{1}{\varepsilon t}
$$
or
\begin{equation}\label{eq980}
u_t\geq \alpha \frac{\vert\nabla u\vert^2}{u}+\beta u^p-\frac{u}{\varepsilon t}.
\end{equation}

Now the point is to find out for what exponents $p>1$ (depending on the dimension $n$) there exists constants $\alpha,\beta\in[0,1]$ satisfying conditions~\eqref{cond2} and~\eqref{cond1}, that is,
$$
(p-1)(\beta p-\alpha)+4\alpha(1-\alpha)(1-\beta)/n\geq 0
$$
$$
p< 1+\frac{8\alpha(1-\beta)}{n}.
$$
We find that these conditions are satisfied
\begin{itemize}
\item for $n\leq 3$, for every $p<8/n$ (choosing $\alpha=1$ and $\beta=1/p$\,) and for no $p\geq 8/n$,
\item for $n\geq 4$ (by using Mathematica\textsuperscript{TM}, see the Appendix), for every and only 
$$
p<\frac{3n+4+3\sqrt{n(n+4)}}{2(3n-4)}
$$
and some $\alpha,\beta\in(0,1)$ (notice the right side is larger than $8/n$\,).
\end{itemize}

\begin{defn}\label{pienne}
Given $p>1$, we define $\overline{p}_n=8/n$ for $n\leq 3$ and
$$
\overline{p}_n=\frac{3n+4+3\sqrt{n(n+4)}}{2(3n-4)}
$$
for $n\geq 4$.\\
We say that $\alpha\in(0,1]$ and $\beta\in[0,1)$ are ``admissible'' for $p\in(1,\overline{p}_n)$ if they satisfy the two conditions~\eqref{cond2} and~\eqref{cond1}, in such case we define $\varepsilon(n,p,\alpha,\beta)$ as in formula~\eqref{epseq}.
\end{defn}

\begin{rem}\label{betanonzero}
It must be $\alpha>0$ and $\beta<1$, otherwise condition~\eqref{cond1} would imply $p<1$. Moreover, notice that for every $p\in(1,\overline{p}_n)$ there is always an admissible pair $(\alpha,\beta)$ with $\beta\not=0$.
\end{rem}

We deal now with the general case where $(M,g)$ is complete with $\Ric\geq 0$.\\
We consider $p\in(1,\overline{p}_n)$, with a relative admissible pair of constants $(\alpha,\beta)$ as above in the compact case. It is easy to see (by continuity) that if $a\in(0,\alpha)$ is sufficiently close to $\alpha$, the pair $(a,\beta)$ is still admissible for $p$, then we define a slightly perturbed function $F$, 
\begin{align*}
F_a=&\,t(f_t-a  \vert\nabla f\vert^2-\beta e^{(p-1)f})\\
=&\,t(\Delta f+(1-a )\vert\nabla f\vert^2+(1-\beta)e^{(p-1)f})
\end{align*}
with $0<a<\alpha$. Repeating the previous computations~\eqref{eq960} in the compact case, we have
\begin{align}
(\partial_t-\Delta)F_a
\geq &\,F_a/t+2a  F_a^2/nt+2ta (1-\beta)^2e^{2f(p-1)}/n+[(p-1)-4a (1-\beta)/n]F_ae^{(p-1)f}\nonumber\\
&\,+2at(1-a )^2\vert\nabla f\vert^4/n-4a (1-a )\vert\nabla f\vert^2F_a/n\nonumber\\
&\,+2\nabla f\nabla F_a+t[(p-1)(\beta p-a )+4a (1-a )(1-\beta)/n]\vert \nabla f\vert^2 e^{(p-1)f}.\nonumber\\
\geq &\,F_a/t+2a  F_a^2/nt+2ta (1-\beta)^2e^{2f(p-1)}/n+[(p-1)-4a (1-\beta)/n]F_ae^{(p-1)f}\nonumber\\
&\,-4a (1-a )\vert\nabla f\vert^2F_a/n+2\nabla f\nabla F_a,\label{eq950}
\end{align}
since condition~\eqref{cond2} holds, with $a$ in place of $\alpha$.\\
We now want to compute the evolution equation of a ``localization'' of the function $F_a$, for $a >1$. We consider the following cut--off functions (of Li and Yau~\cite{liyau}): let $\psi:[0,+\infty)\to\R$ be a smooth function satisfying:
\begin{enumerate}
\item $\psi(s)=1$ if $s\in[0,1]$ and $\psi(s)=0$ if $s\in[2,+\infty)$,
\item $-C_1\leq \psi'(s)\leq 0$ for every $s\in[0,+\infty)$, that is $\psi$ is nonincreasing,
\item $|\psi''(s)|\leq C_2$ for every $s\in[0,+\infty)$,
\item $|\psi'(s)|^2/\psi(s)\leq C_3$ for every $s\in[0,+\infty)$ such that $\psi(s)\not=0$,
\end{enumerate}
for some positive constants $C_1,C_2,C_3$. Then, fixed $x_0\in M$ and $R>0$, denoting with $r=r(x)$ the distance function from the point $x_0$ in $(M,g)$, we define $\varphi(x)=\psi\bigl(\frac{r(x)}{R}\bigr)$.\\
We clearly have
$\nabla\varphi(x)=\psi'\bigl(\frac{r(x)}{R}\bigr)\frac{\nabla r(x)}{R}$, hence
\begin{equation}\label{est2}
\vert \nabla\varphi(x)\vert^2=\Bigl\vert\psi'\Bigl(\frac{r(x)}{R}\Bigr)\Bigr\vert^2\frac{1}{R^2}
\leq C_3\psi\Bigl(\frac{r(x)}{R}\Bigr)\frac{1}{R^2}=\frac{C_3\varphi(x)}{R^2}\leq \frac{C_3}{R^2}.
\end{equation}
when $\nabla\varphi$ exists (almost everywhere), being $\vert\nabla r(x)\vert^2=1$.\\
Thanks to the nonnegativity assumption on the  Ricci tensor, by the {\em Laplacian comparison  theorem} (see~\cite[Chapter~9, Section~3.3]{petersen2} and also~\cite{sheng1}), if $\Ric\geq0$, we have
\begin{equation}\label{est1}
\Delta r(x)\leq\frac{n-1}{r(x)}
\end{equation}
for every $x\in M$ {\em in the sense of support functions} (or {\em in the sense of viscosity}, see~\cite{crisli1} -- check also~\cite[Appendix~A]{manmasura} for comparison of the two notions), in particular, this inequality can be used in maximum principle arguments, see again~\cite[Chapter~9, Section~3]{petersen2}, for instance. Hence, we have $\Delta\varphi(x)=0$ if $r(x)\leq R$ and 
\begin{align}
\Delta \varphi(x)=&\,\psi''\Bigl(\frac{r(x)}{R}\Bigr)\frac{1}{R^2}+\psi'\Bigl(\frac{r(x)}{R}\Bigr)\frac{\Delta r(x)}{R}\\
\geq&\,-\frac{C_2}{R^2}-\frac{C_1(n-1)}{R\,r(x)}\\
\geq&\,-\frac{C_2}{R^2}-\frac{C_1(n-1)}{R^2}\\
=&\,-\frac{C_4}{R^2},
\end{align}
if $r(x)\geq R$. Thus, we have the general estimate on the whole $M$ (in the sense of support functions) $\Delta \varphi(x)\geq -{C_4}/{R^2}$.

Then, setting $Q (x,t) = \varphi(x) F_a (x,t)$, we compute (using formula~\eqref{eq950} and the inequalities above)
\begin{align*}
(\partial_t-\Delta)Q=&\,\varphi(\partial_t-\Delta)F_a -F_a \Delta\varphi-2
\nabla\varphi\nabla F_a \\
\geq&\,\frac{Q}{t}+\frac{2aQ^2}{nt\varphi}+\frac{2t\varphi a (1-\beta)^2}{n}e^{2f(p-1)}+Qe^{(p-1)f}\Bigl[(p-1)-\frac{4a (1-\beta)}{n}\Bigr]\\
&\,-\frac{4a(1-a)}{n}Q\vert\nabla f\vert^2+2\varphi\nabla f\nabla F_a-2\nabla\varphi\nabla F_a+F_a \frac{C_4}{R^2}.
\end{align*}
Arguing as before, setting $Q_{\min}(t)=\min_{x\in M}Q(x,t)$, we have $Q_{\min}(0)=0$ and 
$$
\varphi>0,\qquad 0=\nabla Q=\varphi\nabla F_a +F_a \nabla\varphi,\qquad \Delta Q\geq 0,
$$
at every point $x$ where the minimum of $Q$ is achieved. Hence (recalling that $a<\alpha\leq 1$), by Hamilton's trick, at almost every $t>0$ such that $Q_{\min}(t)<0$ we have
\begin{align*}
\frac{d\,}{dt}Q_{\min}
\geq&\,\frac{Q_{\min}}{t}+\frac{2aQ_{\min}^2}{nt\varphi}+\frac{2t\varphi a (1-\beta)^2}{n}e^{2f(p-1)}+Q_{\min}e^{(p-1)f}\Bigl[(p-1)-\frac{4a (1-\beta)}{n}\Bigr]\\
&\,+Q_{\min }\frac{C_4}{\varphi R^2}-\frac{4a(1-a)}{n}Q_{\min}\vert\nabla f\vert^2+2\varphi F_a \frac{\vert\nabla\varphi\vert^2}{\varphi^2}-2\varphi F_a \frac{\nabla f\nabla\varphi}{\varphi}\\
\geq&\,\frac{Q_{\min}}{t}+\frac{2aQ_{\min}^2}{nt\varphi}+\frac{2t\varphi a (1-\beta)^2}{n}e^{2f(p-1)}+Q_{\min}e^{(p-1)f}\Bigl[(p-1)-\frac{4a (1-\beta)}{n}\Bigr]\\
&\,+Q_{\min}\biggl[\frac{C_4}{\varphi R^2}+\frac{2C_3}{\varphi R^2}\biggr]-\frac{4a(1-a)}{n}Q_{\min}\vert\nabla f\vert^2+2Q_{\min}\frac{C_5\vert\nabla f\vert}{R\sqrt{\varphi}}\\
=&\,\frac{Q_{\min}}{t}+\frac{2aQ_{\min}^2}{nt\varphi}+\frac{2t\varphi a (1-\beta)^2}{n}e^{2f(p-1)}+Q_{\min}e^{(p-1)f}\Bigl[(p-1)-\frac{4a (1-\beta)}{n}\Bigr]\\
&\,+Q_{\min}\biggl[\frac{C_4}{\varphi R^2}+\frac{2C_3}{\varphi R^2}\biggr]-Q_{\min}\frac{4a(1-a )}{na ^2\varphi}\biggl[\vert\nabla f\vert^2\varphi-\frac{2C_6\vert\nabla f\vert\sqrt{\varphi}}{R}\biggl],
\end{align*}
where we used the fact that $\vert \nabla\varphi\vert\leq \sqrt{C_3\varphi}/{R}$, by inequality~\eqref{est2}, and we set $C_5=\sqrt{C_3}$, $C_6=\frac{C_5na ^2}{4a(1-a)}>0$.\\
Setting $y=\vert\nabla f\vert\sqrt{\varphi}\geq 0$, we see that the difference inside the last square brackets is given by $y^2-2C_6y/R$, which is then larger or equal than $-C_6^2/R^2$, hence
\begin{align*}
\frac{d\,}{dt} Q_{\min}
\geq&\,\frac{Q_{\min}}{t}+\frac{2aQ_{\min}^2}{nt\varphi}+\frac{2t\varphi a (1-\beta)^2}{n}e^{2f(p-1)}+Q_{\min}e^{(p-1)f}\Bigl[(p-1)-\frac{4a (1-\beta)}{n}\Bigr]\\
&\,+Q_{\min}\biggl[\frac{C_4}{\varphi R^2}+\frac{2C_3}{\varphi R^2}\biggr]+Q_{\min}\frac{4a(1-a)C_6^2}{na ^2\varphi R^2}\\
=&\,\frac{Q_{\min}}{t}+\frac{2aQ_{\min}^2}{nt\varphi}+\frac{2t\varphi a (1-\beta)^2}{n}e^{2f(p-1)}+Q_{\min}e^{(p-1)f}\Bigl[(p-1)-\frac{4a (1-\beta)}{n}\Bigr]\\
&\,+\frac{C_7Q_{\min}}{\varphi R^2}
\end{align*}
with $C_7=C_4+2C_3+4a(1-a)C_6^2/na^2$. Now we deal with the sum
\begin{align*}
&\frac{2t\varphi a (1-\beta)^2}{n}e^{2f(p-1)}+Q_{\min}e^{(p-1)f}\Bigl[(p-1)-\frac{4a (1-\beta)}{n}\Bigr]\\
&=\frac{2t\varphi a (1-\beta)^2}{n}w^2+Q_{\min}\Bigl[(p-1)-\frac{4a (1-\beta)}{n}\Bigr]w,
\end{align*}
where we set $w=e^{(p-1)f}\geq 0$, as in the compact case. The minimum of this quadratic in $w$ is given by
$$
-Q_{\min}^2\frac{[n(p-1)-4a(1-\beta)]^2}{8nta(1-\beta)^2\varphi}=
\frac{\varepsilon Q_{\min}^2}{t\varphi}-\frac{2aQ_{\min}^2}{nt\varphi}
$$
where $\varepsilon=\varepsilon(n,p,a ,\beta)$ is defined by equation~\eqref{epseq} and it is positive, being the pair $(a,\beta)$ admissible for $p$ (see condition~\eqref{cond1}).\\
Then, substituting in the previous formula and collecting similar terms, we finally get
\begin{align*}
\frac{d\,}{dt} Q_{\min}
\geq&\,\frac{Q_{\min}}{t}+\frac{\varepsilon Q_{\min}^2}{t\varphi}+\frac{C_7Q_{\min}}{\varphi R^2}.
\end{align*}
It follows, arguing as in the compact case that
$$
Q_{\min}\geq -\frac{\varphi}{\varepsilon}-\frac{tC_7}{\varepsilon R^2}\geq -\frac{1}{\varepsilon}-\frac{tC_7}{\varepsilon R^2}
$$
where we used the fact that $\varphi\leq 1$, hence
$$
\varphi(x)\frac{F_a (x,t)}{t}\geq \frac{Q_{\min}(t)}{t}\geq-\frac{1}{\varepsilon t}-\frac{C_7}{\varepsilon R^2}
$$
for every $x\in M$ and $t>0$. Sending $R\to+\infty$ (so that $\varphi \equiv 1$ on $M$), we then have
$$
\frac{F_a (x,t)}{t}\geq -\frac{1}{\varepsilon t}
$$
that is,
$$
f_t-a\vert\nabla f\vert^2-\beta e^{(p-1)f}\geq -\frac{1}{\varepsilon(n,p,a,\beta)t}
$$
and finally sending $a \to\alpha^-$, being $a\mapsto\varepsilon(n,p,a,\beta)$ a continuous function, we get the same conclusion~\eqref{eq970}
$$
f_t-\alpha \vert\nabla f\vert^2-\beta e^{(p-1)f}\geq-\frac{1}{\varepsilon(n,p,\alpha,\beta)t}
$$
that is,
\begin{equation}\label{eq990}
u_t-\alpha\frac{\vert\nabla u\vert^2}{u}-\beta u^p\geq -\frac{u}{\varepsilon(n,p,\alpha,\beta)t}
\end{equation}
for every $x\in M$ and $t>0$, given any pair $(\alpha,\beta)$ admissible for $p\in(1,\overline{p}_n)$.

Hence, we obtained the following proposition.

\begin{prop}\label{main}
Let $u:M\times[0,T)\to\R$ a classical positive solution of the equation $u_t = \Delta u + u^p$ with $p\in(1,\overline{p}_n)$, on an $n$--dimensional complete Riemannian manifold 
$(M,g)$ with nonnegative Ricci tensor. Then, for every pair $(\alpha,\beta)$ admissible for $p$, there holds
\begin{equation*}
u_t-\alpha\frac{\vert\nabla u\vert^2}{u}-\beta u^p\geq -\frac{u}{\varepsilon(n,p,\alpha,\beta)t}
\end{equation*}
for every $x\in M$ and $t>0$.
\end{prop}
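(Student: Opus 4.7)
The plan is to linearize the equation by setting $f = \log u$, so that
$$f_t = \Delta f + |\nabla f|^2 + e^{(p-1)f},$$
and to study the Li--Yau quantity
$$F = t\bigl(f_t - \alpha |\nabla f|^2 - \beta e^{(p-1)f}\bigr) = t\bigl(\Delta f + (1-\alpha)|\nabla f|^2 + (1-\beta)e^{(p-1)f}\bigr),$$
so that the sought inequality is equivalent to $F/t \geq -1/(\varepsilon t)$. The strategy is to show that $(\partial_t - \Delta)F$ satisfies a parabolic differential inequality with a dominant $F^2/t$ coefficient, and then compare with an explicit supersolution via Hamilton's trick on $F_{\min}$.

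Concretely, I would differentiate the evolution equations of $|\nabla f|^2$, $\Delta f$ and $e^{(p-1)f}$, applying the Bochner identity. Nonnegativity of $\Ric$ produces $\nabla f \cdot \Delta \nabla f \geq \nabla f \cdot \nabla \Delta f$, and the standard algebraic bound $|\Hess f|^2 \geq (\Delta f)^2/n$ converts the Hessian term into $2\alpha(\Delta f)^2/n$. Substituting $\Delta f = F/t - (1-\alpha)|\nabla f|^2 - (1-\beta)e^{(p-1)f}$ and regrouping yields an expression of the schematic shape
$$(\partial_t - \Delta)F \geq \frac{F}{t} + \frac{2\alpha F^2}{nt} + (\text{quadratic in } |\nabla f|^2,\, e^{(p-1)f}) + 2\nabla f \cdot \nabla F + (\text{cross term in } |\nabla f|^2 e^{(p-1)f}),$$
where the cross term is sign-controllable precisely under the first admissibility condition $(p-1)(\beta p - \alpha) + 4\alpha(1-\alpha)(1-\beta)/n \geq 0$. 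Dropping the gradient-transport term (which vanishes at a spatial minimum) and completing the square in $w := e^{(p-1)f}$ produces the constant $\varepsilon$ of~\eqref{epseq}, positive exactly when $p < 1 + 8\alpha(1-\beta)/n$. For compact $M$, Hamilton's trick then gives
$$\frac{d}{dt} F_{\min} \geq \frac{F_{\min}}{t} + \frac{\varepsilon F_{\min}^2}{t},$$
and since $F_{\min}(0) = 0$, elementary ODE comparison forces $F_{\min}(t) \geq -1/\varepsilon$.

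The main obstacle is the noncompact case, where the minimum of $F$ need not be attained. I would localize with a Li--Yau cutoff $\varphi(x) = \psi(r(x)/R)$ for $r$ the distance from a fixed point $x_0$; the Laplacian comparison theorem $\Delta r \leq (n-1)/r$, valid in the support sense under $\Ric \geq 0$, yields $|\nabla \varphi|^2/\varphi \leq C/R^2$ and $\Delta \varphi \geq -C/R^2$. Applying the same maximum-principle argument to $Q = \varphi F_a$ with $0 < a < \alpha$ (still admissible by continuity) introduces error terms from $\Delta \varphi$, $\nabla \varphi \cdot \nabla F_a$, and a fourth-order term $|\nabla f|^4$ that could be harmlessly discarded in the compact setting. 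The key technical point is to absorb these: the quartic gradient term, retained thanks to the strict inequality $a < \alpha$, combines with the cross term $|\nabla f| |\nabla \varphi|$ via completing the square in $|\nabla f|\sqrt{\varphi}$, producing only an $O(1/R^2)$ penalty. The resulting differential inequality
$$\frac{d}{dt} Q_{\min} \geq \frac{Q_{\min}}{t} + \frac{\varepsilon Q_{\min}^2}{t \varphi} + \frac{C\, Q_{\min}}{\varphi R^2}$$
leads, by the same ODE argument, to $Q_{\min}(t) \geq -1/\varepsilon - tC/(\varepsilon R^2)$. Letting $R \to \infty$ (so $\varphi \to 1$) and then $a \to \alpha^-$ (using continuity of $\varepsilon(n,p,a,\beta)$ in $a$) recovers the claimed pointwise inequality on all of $M$.
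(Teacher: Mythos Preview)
Your proposal is correct and follows essentially the same approach as the paper: the substitution $f=\log u$, the Li--Yau quantity $F$, the Bochner/Ricci step, the completion of the square in $w=e^{(p-1)f}$ producing $\varepsilon$, Hamilton's trick in the compact case, and the Li--Yau cutoff with the perturbation $a<\alpha$ followed by the double limit $R\to\infty$, $a\to\alpha^-$ in the noncompact case are exactly what the paper does. One small terminological slip: in the localization step the term that absorbs the cross term $|\nabla f|\,|\nabla\varphi|$ is not the quartic $|\nabla f|^4$ (which the paper simply drops, being nonnegative) but rather the quadratic-in-gradient term $-\tfrac{4a(1-a)}{n}\,Q\,|\nabla f|^2$, which is strictly positive at a negative minimum precisely because $a<\alpha\leq 1$ forces $1-a>0$; the square completed is in $y=|\nabla f|\sqrt{\varphi}$ against this quadratic term.
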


\begin{rem}\label{remp1}
By carefully inspecting the proof, we can see that it works also if $p\in(0,1]$, moreover, we can choose $\alpha=\beta=1$ (check the last term of inequality~\eqref{eq960}), leading to the conclusion
\begin{equation*}
u_t-\frac{\vert\nabla u\vert^2}{u}-u^p\geq -\frac{2u}{nt}
\end{equation*}
for every $x\in M$ and $t>0$.
\end{rem}

\begin{rem}
Easily, considering a suitable multiple of the solution, a conclusion analogous to Proposition~\ref{main} also holds for the positive solutions of the equation $u_t = \Delta u + au^p$ with $a\in\R$ positive constant.
\end{rem}

\section{Harnack inequality}

We now derive the following Harnack inequality as a corollary of the semilinear Li~\&~Yau estimates.

\begin{prop}\label{harn}
Let $u:M\times[0,T)\to\R$ a classical positive solution of the equation $u_t = \Delta u + u^p$ with $p\in(1,\overline{p}_n)$, on an $n$--dimensional complete Riemannian manifold 
$(M,g)$ with nonnegative Ricci tensor. Then, for every pair $(\alpha,\beta)$ admissible for $p$ and $\varepsilon =\varepsilon(n,p,\alpha,\beta)$ as in formula~\eqref{epseq}, given any $0<t_1<t_2\leq T$ and $x_1,x_2 \in M$, the following inequality holds
\begin{equation*}
u(x_1,t_1) \leq u(x_2,t_2) \left(\frac{t_2}{t_1}\right)^{\!\!1/\varepsilon}\!\!\!\exp(\rho_{\alpha,\beta}(x_1,x_2,t_1, t_2)),
\end{equation*}
where 
\begin{equation*}
\rho_{\alpha,\beta} (x_1,x_2,t_1, t_2) = \inf_{\gamma \in \Gamma(x_1,x_2)} \int_{0}^{1} \left[\frac{|\dot{\gamma}(s)|^2}{4 \alpha (t_2-t_1)} - \beta (t_2-t_1) u^{p-1}(\gamma(s), (1-s)t_2+st_1)  \right]\, ds,
\end{equation*}
with $\Gamma(x_1,x_2)$ given by all the paths in $M$ parametrized by $[0,1]$ joining $x_2$ to $x_1$. In particular, since $u \geq 0$, we have the following bound
\begin{equation*}
u(x_1,t_1) \leq u(x_2,t_2) \left(\frac{t_2}{t_1}\right)^{\!\!1/\varepsilon}\!\!\!\exp \left(\inf_{\gamma \in \Gamma(x_1,x_2)} \int_{0}^{1} \frac{|\dot{\gamma}(s)|^2}{4 \alpha (t_2-t_1)} \, ds \right).
\end{equation*}
\end{prop}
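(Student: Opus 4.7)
The plan is to derive the Harnack inequality by integrating the pointwise Li~\&~Yau estimate from Proposition~\ref{main} along a space--time path. Setting $f=\log u$, Proposition~\ref{main} reads
\begin{equation*}
f_t\ge \alpha|\nabla f|^2+\beta e^{(p-1)f}-\frac{1}{\varepsilon t}\qquad\text{on } M\times(0,T).
\end{equation*}
Fix $0<t_1<t_2\le T$ and $x_1,x_2\in M$, and pick any smooth path $\gamma\in\Gamma(x_1,x_2)$, so $\gamma(0)=x_2$, $\gamma(1)=x_1$. Define the time reparametrization $\tau(s)=(1-s)t_2+st_1$, which runs from $t_2$ down to $t_1$, and consider $g(s)=f(\gamma(s),\tau(s))$.

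First I would write the chain rule
\begin{equation*}
g'(s)=\nabla f\cdot\dot\gamma(s)-(t_2-t_1)f_t(\gamma(s),\tau(s)),
\end{equation*}
substitute the Li~\&~Yau lower bound for $f_t$, and use Young's inequality in the form $\nabla f\cdot\dot\gamma\le \alpha(t_2-t_1)|\nabla f|^2+\frac{|\dot\gamma|^2}{4\alpha(t_2-t_1)}$. The coefficient of $\alpha(t_2-t_1)$ is precisely chosen so the $|\nabla f|^2$ terms cancel, yielding
\begin{equation*}
g'(s)\le \frac{|\dot\gamma(s)|^2}{4\alpha(t_2-t_1)}-\beta(t_2-t_1)e^{(p-1)f(\gamma(s),\tau(s))}+\frac{t_2-t_1}{\varepsilon\,\tau(s)}.
\end{equation*}

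Next I would integrate this inequality in $s$ from $0$ to $1$. The left--hand side yields $f(x_1,t_1)-f(x_2,t_2)$. For the last term on the right, the substitution $\tau=(1-s)t_2+st_1$ gives
\begin{equation*}
\int_0^1\frac{t_2-t_1}{\varepsilon\,\tau(s)}\,ds=\frac{1}{\varepsilon}\log\!\left(\frac{t_2}{t_1}\right).
\end{equation*}
Recalling $e^{(p-1)f}=u^{p-1}$, one obtains
\begin{equation*}
\log u(x_1,t_1)-\log u(x_2,t_2)\le \frac{1}{\varepsilon}\log\!\left(\frac{t_2}{t_1}\right)+\int_0^1\!\left[\frac{|\dot\gamma(s)|^2}{4\alpha(t_2-t_1)}-\beta(t_2-t_1)u^{p-1}(\gamma(s),\tau(s))\right]ds.
\end{equation*}
Exponentiating and then taking the infimum over $\gamma\in\Gamma(x_1,x_2)$ yields the first displayed bound; discarding the (nonpositive) term involving $-\beta u^{p-1}$ gives the cruder second bound.

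The computation is essentially routine once one spots the correct Young inequality constant, so I do not expect any real obstacle. The only point requiring care is to justify the integration along $\gamma$: if $M$ is complete one may first restrict to piecewise smooth curves (which can approximate the distance) and use that $u$ is $C^2$ in space and $C^1$ in time, so $g(s)$ is absolutely continuous and the fundamental theorem of calculus applies. The final infimum over paths is then legitimate for each fixed $x_1,x_2$.
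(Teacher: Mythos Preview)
Your proof is correct and follows essentially the same route as the paper: integrate the pointwise Li~\&~Yau inequality for $f=\log u$ along the space--time path $s\mapsto(\gamma(s),(1-s)t_2+st_1)$, absorb the gradient term via the Young/quadratic inequality with constant $\alpha(t_2-t_1)$, evaluate the $\int_0^1\frac{t_2-t_1}{\varepsilon\tau(s)}\,ds$ term, exponentiate, and minimize over paths. The only cosmetic difference is that the paper first bounds $\nabla f\cdot\dot\gamma$ by $|\nabla f||\dot\gamma|$ and then optimizes the resulting quadratic in $|\nabla f|$, whereas you apply Young's inequality directly---these are the same step.
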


\begin{proof}
We start with the inequality for $\log u$ that we derived in Section~~\ref{liyausem},
\begin{equation}\label{harn1}
(\log u)_t-\alpha \vert\nabla \log u \vert^2-\beta u^{p-1} \geq -\frac{1}{\varepsilon t}.
\end{equation}
Then, let $\gamma:[0.1]\to M$ be any curve such that $\gamma(0) = x_2$ and $\gamma(1) = x_1$ and consider the path $\eta:[0,1]\to M\times[t_1,t_2]$ defined as $\eta(s)= (\gamma(s), (1-s)t_2+st_1)$. Notice that $\eta(0)= (x_2, t_2)$ and $\eta(1)= (x_1, t_1)$. Integrating along $\eta$, thanks to estimate~\eqref{harn1}, we have
\begin{align*}
\log& \left(\frac{u(x_1,t_1)}{u(x_2,t_2)}\right) = \log u(x_1,t_1) - \log u(x_2,t_2)\\
&\, = \int_{0}^{1} \left( \frac{d\,}{ds} \log u (\eta(s)) \right) \, ds = \int_{0}^{1} \left[\langle \nabla \log u, \dot{\gamma} \rangle - (t_2-t_1) (\log u)_s \right] \, ds\\
&\, \leq \int_{0}^{1} \left[ \vert \nabla \log u \vert \vert \dot{\gamma}\vert + (t_2-t_1) \left(\frac{1}{\varepsilon [(1-s)t_2+st_1]} -\alpha \vert\nabla \log u \vert^2-\beta u^{p-1}\right) \right] \, ds\\
&\, = \int_{0}^{1} \frac{t_2-t_1}{\varepsilon [(1-s)t_2+st_1]} \, ds + \int_{0}^{1} \left[ \vert \nabla \log u \vert \vert \dot{\gamma}\vert - \alpha (t_2-t_1) \vert\nabla \log u \vert^2-\beta (t_2-t_1) u^{p-1}\right] \, ds\\
&\, \leq -\frac{\log[(1-s)t_2+st_1]}{\varepsilon}\,\biggr\vert_{0}^{1} + \int_{0}^{1} \left[\frac{\vert \dot{\gamma}\vert^2}{4\alpha (t_2-t_1)} -\beta (t_2-t_1) u^{p-1} \right] \, ds\\
&\, = \frac{1}{\varepsilon} \log \left( \frac{t_2}{t_1}\right) + \int_{0}^{1} \left[\frac{\vert \dot{\gamma}\vert^2}{4\alpha (t_2-t_1)} -\beta (t_2-t_1) u^{p-1} \right]\,ds
\end{align*}
Taking the exponential of this estimate, we obtain the desired inequality.
\end{proof}

\section{Ancient and eternal solutions}\label{anci}

Even if in the analysis of singularities of positive solutions of equation $u_t = \Delta u + u^p$, which we briefly discuss in Section~\ref{blowupsec}, only the eternal/ancient solutions in $\R^n$ are relevant, it is clearly of interest discussing (and possibly determining all) such solutions also on a general Riemannian manifold.
 
Let $u:M\times(-\infty,T)\to\R$ be a classical, positive, {\em ancient} solution of $u_t = \Delta u + u^p$, with $p\in(0,\overline{p}_n)$, where $(M,g)$ is a complete $n$--dimensional Riemannian manifold with nonnegative Ricci tensor.\\
By repeating the argument of the previous sections (or ``translating'' the solution) with $t-T_0$ in place of $t$, for any $T_0<T$, in the time interval $[T_0,T)$, we conclude that at every $x\in M$ and $t>T_0$ we have 
(estimate~\eqref{eq990})
$$
u_t\geq \alpha \frac{\vert\nabla u\vert^2}{u}+\beta u^p-\frac{u}{\varepsilon(t-T_0)}
$$
for every pair $(\alpha,\beta)$ admissible for $p$, where $\varepsilon>0$ is independent of $x$ and $t$.\\
Hence, sending $T_0\to-\infty$, we conclude
$$
u_t\geq \alpha \frac{\vert\nabla u\vert^2}{u}+\beta u^p>0
$$
for every $x\in M$ and $t\in(-\infty,T)$, if $\beta>0$ (which can always be chosen, by Remark~\ref{betanonzero}). Hence, we have the following consequence of Proposition~\ref{main}.

\begin{prop}\label{prop-mon}
Let $u:M\times(-\infty,T)\to\R$ a classical, positive, ancient solution of the equation $u_t = \Delta u + u^p$ with $p\in(0,\overline{p}_n)$, on an $n$--dimensional complete Riemannian manifold 
$(M,g)$ with nonnegative Ricci tensor. Then, there exist admissible pairs $\alpha,\beta\in(0,1)$ for $p$ and for any of such pairs there holds
\begin{equation}\label{mono}
u_t\geq\alpha\frac{\vert\nabla u\vert^2}{u}+\beta u^p\geq\beta u^p>0,
\end{equation}
for every $x\in M$ and $t\in(-\infty,T)$. In particular, for every fixed $x\in M$ the function $t\mapsto u(x,t)$ is monotone increasing.
\end{prop}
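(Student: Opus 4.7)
The plan is to apply Proposition~\ref{main} to time-translates of $u$ and then let the translation parameter tend to $-\infty$, exploiting the fact that the equation $u_t = \Delta u + u^p$ is autonomous. This converts the Li~\&~Yau inequality, whose error term $u/(\varepsilon t)$ is singular at the initial time, into a clean differential inequality on all of $M\times(-\infty,T)$.

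Concretely, I would fix $p\in(0,\overline{p}_n)$ together with an admissible pair $(\alpha,\beta)$, and for each $T_0<T$ define $v(x,s):=u(x,s+T_0)$ on $M\times[0,T-T_0)$. Since the PDE is time-autonomous, $v$ is a classical positive solution satisfying the hypotheses of Proposition~\ref{main}, so
\begin{equation*}
v_s(x,s)-\alpha\frac{|\nabla v(x,s)|^2}{v(x,s)}-\beta v^p(x,s)\geq -\frac{v(x,s)}{\varepsilon(n,p,\alpha,\beta)\,s}
\end{equation*}
for all $x\in M$ and $s>0$. Rewriting in terms of $u$, with $t=s+T_0$, yields
\begin{equation*}
u_t(x,t)-\alpha\frac{|\nabla u(x,t)|^2}{u(x,t)}-\beta u^p(x,t)\geq -\frac{u(x,t)}{\varepsilon(n,p,\alpha,\beta)(t-T_0)}
\end{equation*}
for every $x\in M$ and every $t>T_0$. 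The left-hand side does not depend on $T_0$, so I would fix an arbitrary $(x,t)$ with $t<T$ and let $T_0\to-\infty$; since $\varepsilon>0$ and $u(x,t)$ is finite, the right-hand side tends to $0$, leaving
\begin{equation*}
u_t\geq \alpha\frac{|\nabla u|^2}{u}+\beta u^p \qquad \text{on } M\times(-\infty,T).
\end{equation*}

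To finish, I would invoke Remark~\ref{betanonzero} (respectively Remark~\ref{remp1} when $p\in(0,1]$) to choose an admissible pair with $\beta>0$. Since $u$ is strictly positive and $\alpha\ge 0$, the first term on the right is nonnegative and the second is strictly positive, giving $u_t\geq \beta u^p>0$ pointwise, which in turn implies that $t\mapsto u(x,t)$ is strictly increasing for every fixed $x\in M$. There is really no hard step: all the analytic work has been absorbed into Proposition~\ref{main}, and the only thing to verify is the legitimacy of letting $T_0\to-\infty$, which is immediate from the $T_0$-independence of the left-hand side and the positivity of $\varepsilon$.
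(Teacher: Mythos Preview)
Your proposal is correct and mirrors the paper's own argument almost verbatim: time-translate the ancient solution, apply the Li~\&~Yau estimate (Proposition~\ref{main}, respectively Remark~\ref{remp1} for $p\in(0,1]$) on $[T_0,T)$, and send $T_0\to-\infty$ to kill the $u/(\varepsilon(t-T_0))$ error term. The only cosmetic remark is that your invocation of Remark~\ref{remp1} should come when you apply the estimate to $v$, not merely when you select $\beta>0$, since Proposition~\ref{main} as stated covers only $p\in(1,\overline{p}_n)$; but this is exactly how the paper handles the split, and your intent is clear.
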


The following conclusion for eternal solutions is then straightforward, since any (non--identically zero) solution must blow up in finite time.

\begin{cor} In the same hypotheses of this proposition, $u$ is uniformly bounded above (in space and time) in every $M\times(-\infty,T')$, for any $T'<T$. Moreover, the function $t\mapsto\sup_{x\in M}u(x,t)$ is monotone increasing and
$$
\lim_{t\to-\infty}\sup_{x\in M}u(x,t)=0.
$$
If the function $u$ is an eternal nonnegative solution of $u_t = \Delta u + u^p$, with $p\in(0,\overline{p}_n)$, where $(M,g)$ is a complete $n$--dimensional Riemannian manifold with nonnegative Ricci tensor, it must necessarily be identically zero.
\end{cor}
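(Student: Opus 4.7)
The proof uses only the pointwise ODE inequality $u_t\geq\beta u^p$ with $\beta>0$ supplied by Proposition \ref{prop-mon}, together with the positivity of $u$ on its domain of definition. I would dispatch the three assertions in order.

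Monotonicity of $t\mapsto\sup_{x\in M}u(x,t)$ is immediate from $u_t>0$ pointwise: for each fixed $x$ the function $t\mapsto u(x,t)$ is nondecreasing, and that property is inherited by the supremum in $x$.

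For the uniform bound on $M\times(-\infty,T')$ and the limit at $-\infty$, I would fix $x\in M$ and treat $v(t):=u(x,t)>0$ as a scalar function satisfying $v'\geq\beta v^p$. In the superlinear range $p>1$ this rewrites as $-\frac{1}{p-1}\frac{d\,}{dt}v^{1-p}\geq\beta$; integrating from $t_0$ to $t\in(t_0,T)$ and using that $v(t)^{1-p}$ must remain positive on the whole interval $(t_0,T)$, one obtains, after letting $t\to T^-$, the explicit universal barrier
\begin{equation*}
u(x,t_0)\leq\bigl[\beta(p-1)(T-t_0)\bigr]^{-1/(p-1)},
\end{equation*}
independent of $x\in M$ and of the particular solution. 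Taking the supremum in $x$ yields the uniform bound on $M\times(-\infty,T']$ for every $T'<T$, and letting $t_0\to-\infty$ sends the right-hand side to $0$, giving $\sup_{x\in M}u(x,t_0)\to 0$.

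For the eternal case the endpoint $T$ may be chosen arbitrarily large; applying the barrier above with $T\to+\infty$ at any fixed $(x_0,t_0)$ forces $u(x_0,t_0)\leq 0$, and the nonnegativity assumption then collapses $u$ to the identically zero solution. The one point I expect to deserve extra care is the subcritical range $p\in(0,1]$: Remark \ref{remp1} still supplies $u_t\geq u^p$ after the ancient translation $T_0\to-\infty$, but the comparison ODE $v'=v^p$ no longer blows up in finite time for $p\leq 1$, so the universal upper bound and the triviality of eternal solutions genuinely exploit superlinearity $p>1$; this is the step I would flag in a complete write-up, since the $p=1$ exponential comparison in particular does not by itself rule out eternal profiles of the form $e^{t}$ on a manifold with $\Delta u\equiv 0$.
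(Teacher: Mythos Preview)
Your argument for $p>1$ is correct and is precisely the paper's approach. The paper gives no detailed proof, only the sentence that the corollary is ``straightforward, since any (non--identically zero) solution must blow up in finite time,'' and in the paragraph immediately following it records the same universal barrier $u(x,t)\leq C(\overline{T}-t)^{-1/(p-1)}$ obtained by direct integration of $u_t\geq\beta u^p$. You have simply written out what the paper leaves implicit.

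Your caveat about $p\in(0,1]$ is not pedantry but points to a genuine issue with the stated range. At $p=1$ the corollary is in fact false: your own example $u(x,t)=e^{t}$ is already a nonzero positive eternal solution of $u_t=\Delta u+u$ on any $(M,g)$, and on flat $\R^n$ the function $u(x,t)=e^{x_1+2t}$ is in addition unbounded in space at every fixed time, so neither the uniform bound nor $\sup_x u\to 0$ as $t\to-\infty$ holds either. Both examples satisfy the ancient Li--Yau inequality of Proposition~\ref{prop-mon} (with $\alpha=\beta=1$, via Remark~\ref{remp1}) with equality, so no refinement of that inequality can rescue the argument. The paper itself silently restricts to $p>1$ when writing the decay estimate just after the corollary, so the intended range is presumably $p\in(1,\overline{p}_n)$. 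For $0<p<1$ there is no difficulty: integrating $u_t\geq u^p$ backwards gives $u^{1-p}(x,s)\leq u^{1-p}(x,t)-(1-p)(t-s)\to-\infty$ as $s\to-\infty$, so no positive ancient solution exists and the statement is vacuous.
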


For $p>1$, by direct integration of inequality~\eqref{mono} we get the estimate
$$
u(x,t)\leq \frac{C}{(\overline{T}-t)^{\frac{1}{p-1}}}
$$
for some $\overline{T}\geq T$ and $C=(\beta(p-1))^{-\frac{1}{p-1}}$. This "universal" decay estimate for ancient solutions on manifolds with nonnegative Ricci tensor was previously known, thanks to Corollary 3.7 in~\cite{cama2}, in the case of Riemannian manifolds with bounded geometry, but with a different proof based on a blow--up argument (see Section~\ref{blowupsec}).

By the same blow--up argument (see~\cite[Theorem~3.6 and Corollaries~3.7, 3.8]{cama2}) it can be shown that if in $\R^n$, for a fixed $p>1$, there are no nonzero, bounded, positive, eternal solutions of $u_t = \Delta u + u^p$, then the same holds for every complete $n$--dimensional Riemannian manifold with bounded geometry (notice that nonnegativity of the Ricci tensor is not necessary).\\
In the Euclidean space, it is well known that nontrivial global radial (static) solutions on $\mathbb{R}^n \times \mathbb{R}$ exist for any supercritical exponent $p \geq p_S=\frac{n+2}{n-2}$, see for instance Proposition~B at page~1155 in~\cite{guiniwang}. Conversely, while the triviality of eternal {\em radial} solutions can be shown in the full range of subcritical exponents $1<p<\frac{n+2}{n-2}$ (see Theorem~B at page~882 in~\cite{pqs2}), the same expected result for general (not necessarily radial) solutions was known only in the range $1<p< \frac{n(n+2)}{(n-1)^2}$ (see Theorem A page 882 in~\cite{pqs2}). Indeed, such triviality when $\frac{n(n+2)}{(n-1)^2} \leq p< p_S$ was a quite long standing open problem  (see~\cite{pqs1,pqs2}), which has been finally settled by Quittner for every $p>1$ when $n\leq 2$ in~\cite{quittner} and, very recently, for any $p< p_S$ when $n\geq 3$ in~\cite{quittner2}.\\
Then, by such optimal result of Quittner, in the general case of a Riemannian manifold, we can extend the triviality result for eternal solutions to Riemannian manifolds with bounded geometry (see Corollary~3.8 in~\cite{cama2}).

Unfortunately, our bound $\overline{p}_n$ on the exponent $p$ is actually smaller than $\frac{n(n+2)}{(n-1)^2}$, for every $n\geq 2$, but larger than $\frac{n}{n-2}$ (which appears in~\cite{quittner}) when $n\geq4$ (see the computation in the Appendix).\\

Turning our attention to the (only) ancient solutions of $u_t = \Delta u + u^p$ on a complete, $n$--dimensional Riemannian manifold with nonnegative Ricci tensor, we first notice that if $p\leq 1$, we have seen in Remark~\ref{remp1} that we can choose $\alpha=\beta=1$, hence
\begin{equation*}
u_t=\Delta u+u^p\geq\frac{\vert\nabla u\vert^2}{u}+u^p
\end{equation*}
that is,
\begin{equation*}
\Delta \log u\geq0.
\end{equation*}
If $M$ is compact, this clearly implies that every ancient, positive solution is trivial.\\
Not much is known, in a general Riemannian manifold, when $p>1$. By the work of Merle and Zaag~\cite{merlezaag} (and the above results of Quittner), it follows that in $\R^n$, if $1<p< p_S=\frac{n+2}{n-2}$ when $n\geq 3$, or for every $p>1$ if $n\leq 2$, any ancient, positive solution is trivial. We conjecture that this holds in general.

\begin{conge}
For any complete, $n$--dimensional Riemannian manifold $(M,g)$ with bounded geometry, if $p< p_S=\frac{n+2}{n-2}$ when $n\geq 3$, or for every $p>0$ if $n\leq 2$, any ancient, positive solution of $u_t = \Delta u + u^p$ is trivial. 
\end{conge}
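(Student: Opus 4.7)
The natural strategy is a parabolic blow-up argument reducing the Riemannian conjecture to the Euclidean triviality results of Merle--Zaag~\cite{merlezaag} and Quittner~\cite{quittner,quittner2} quoted above. This is the same philosophy that already succeeded for \emph{eternal} solutions on bounded-geometry manifolds in~\cite[Theorem~3.6 and Corollary~3.8]{cama2}, so the task is to adapt it to the strictly ancient setting.

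Assume by contradiction that $u$ is a positive, non-trivial ancient solution on $M\times(-\infty,T)$, so that there exists $(x_0,t_0)$ with $|\nabla u(x_0,t_0)|>0$. Proposition~\ref{prop-mon} gives $u_t\geq\beta u^p>0$ and the universal decay estimate $u(x,t)\leq C(\overline{T}-t)^{-1/(p-1)}$, so that $u$ is uniformly bounded on $M\times(-\infty,T']$ for every $T'<T$ and $u(x,t)\to 0$ uniformly as $t\to-\infty$. The plan is to select points $(x_k,t_k)\in M\times(-\infty,T)$ and scales $\lambda_k>0$ and define
\[
v_k(y,s)=\lambda_k^{2/(p-1)}\, u\bigl(\exp_{x_k}(\lambda_k y),\,t_k+\lambda_k^2 s\bigr),
\]
each solving $\partial_s v_k=\Delta_{g_k}v_k+v_k^p$ for the pulled-back rescaled metric $g_k$. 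Bounded geometry (injectivity radius bounded below and curvature bounds) yields, after passing to a subsequence, pointed Cheeger--Gromov convergence of $(M,g_k,x_k)$ to $(\R^n,g_{\mathrm{eucl}},0)$ whenever $\lambda_k\to 0$, and the decay estimate transfers to uniform $L^\infty$ control of the $v_k$ on compact parabolic cylinders. Standard Schauder theory then extracts a $C^{2,1}_{\mathrm{loc}}$ limit $v_\infty\geq 0$, which is a classical ancient solution of $u_t=\Delta u+u^p$ on $\R^n$; the Merle--Zaag and Quittner theorems force $v_\infty$ to be trivial, i.e.\ constant in space.

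The \emph{main obstacle} is that a spatially constant limit $v_\infty$ does \emph{not} by itself contradict the non-triviality of $u$, which is precisely what distinguishes the ancient case from the eternal case (where triviality of the limit means $v_\infty\equiv 0$, and that already contradicts a non-zero normalization at the base point). One must therefore tune the choice of $(x_k,t_k,\lambda_k)$ to preserve a quantitative measure of spatial non-triviality in the limit — for instance by arranging $|\nabla v_k(0,0)|=1$, or that the spatial oscillation of $v_k$ on the unit parabolic cylinder equals $1$ — so that $v_\infty$ is forced to be non-constant in space, directly contradicting triviality. Such a normalization fixes $\lambda_k$ in terms of $|\nabla u(x_k,t_k)|$ (or of the corresponding oscillation) and then requires a Hamilton-type point-picking argument to simultaneously guarantee $\lambda_k\to 0$ (so the metric flattens), that the rescaled cylinder stays within the domain of definition, and that $v_k$ enjoys two-sided bounds on it. Carrying out this delicate balancing act — or bypassing it via a direct Liouville-type argument on $M$ that avoids the Euclidean reduction — appears to be exactly what is missing, and is the reason the statement remains a conjecture.
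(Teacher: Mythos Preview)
The statement is a \emph{conjecture}; the paper does not prove it and explicitly says so. Your submission is likewise not a proof but a heuristic discussion of why the natural blow--up strategy stalls, and in that sense it is appropriate. Your diagnosis of the obstruction --- that in the ancient (as opposed to eternal) case the Euclidean blow--up limit is permitted to be a nonzero \emph{trivial} (spatially constant) solution, so a Hamilton--type point--picking normalization would be needed to force spatial non--constancy of the limit --- is consistent with the paper's own remarks surrounding the conjecture: they note that the Merle--Zaag argument seems hard to transplant to manifolds and that the only case they can settle (compact $M$ with $\Ric>0$, via~\cite[Theorem~1.2]{cama2} combined with Quittner) goes through a different route.

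One caution on the details of your sketch: you invoke Proposition~\ref{prop-mon} to obtain $u_t\geq\beta u^p$ and the ensuing decay, but that proposition requires $\Ric\geq 0$ and $p\in(1,\overline{p}_n)$, whereas the conjecture is stated under bounded geometry and for all subcritical $p<p_S$. The universal decay estimate is nevertheless available under bounded geometry via the blow--up argument of~\cite[Corollary~3.7]{cama2}, as the paper itself recalls just before the conjecture, so your outline survives; but the pointwise monotonicity $u_t>0$ is not known in that generality, and you should not lean on it when discussing the full conjecture.
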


The line of~\cite{merlezaag} is (at least apparently) difficult to be generalized to manifolds, possibly requiring in the computations/estimates the (quite restrictive) assumptions of parallel Ricci tensor and nonnegative sectional curvature. Anyway, for a compact Riemannian manifold with $\Ric>0$, we were able to obtain the triviality of ancient solutions in the same range of exponents $0\leq p<p_S$ (by the discussion above and Theorem~1.2 in~\cite{cama2}, in combination with the results of Quittner). In the next section we are trying to find out some other properties of ancient solutions.

\section{Convexity in time and monotonicity of the gradient}\label{monot}

The aim of this section is to discuss further qualitative properties of ancient, positive solutions of $u_t = \Delta u + u^p$, that can be derived analogously as the previous Li~\&~Yau type inequalities.

We now show that any ancient solution, apart from being monotone in time, as proved in Proposition~\ref{prop-mon}, is also convex in time, i.e. $\partial^2_t u > 0$, provided $n\geq 5$. Without loss of generality, we assume that $M$ is compact, since the noncompact case can be treated, as in the second part of Section~\ref{liyausem}, through a cut--off argument.

Let $u:M\times(-\infty,T)\to\R$ a classical, positive, ancient solution of the equation $$u_t = \Delta u + u^p$$ with $p\in(0,\overline{p}_n)$, on an $n$--dimensional, $n\geq 5$, complete Riemannian manifold 
$(M,g)$ with nonnegative Ricci tensor.
From Proposition~\ref{prop-mon}, for some admissible $0 <\alpha,\beta <1$, we proved
\begin{equation}\label{conv1}
u_t \geq \alpha \frac{|\nabla u|^2}{u}+ \beta  u^p.
\end{equation}
In particular, the function 
$$v=\partial_t u$$ is positive and satisfies
\begin{equation*}
\partial_t v - \Delta v = h v,
\end{equation*}
where we set $h=p u^{p-1}$. 
Let now $f= \log v$ and, observing that
\begin{equation*}
\partial_t f = \frac{v_t}{v} \quad 
\text{ and } \quad 
\Delta f = \frac{\Delta v}{v} - |\nabla f|^2,
\end{equation*}
we easily see that $f$ satisfies
\begin{equation*}
(\partial_t - \Delta) f = |\nabla f|^2 + h.
\end{equation*}
We introduce, for every $T_0<t<T$, the function
\begin{equation}\label{conv2}
F = (t-T_0)\left( \Delta f + (1-\alpha) |\nabla f|^2 + (1-\beta) h\right).
\end{equation}
A simple computation shows the following
\begin{align*}
(\partial_t - \Delta) \Delta f &= 2|\nabla^2 f|^2 + 2 Ric(\nabla f,\nabla f)+2  \langle \nabla \Delta f, \nabla f \rangle + \Delta h\\
(\partial_t - \Delta) f_i &= 2f_{ik} f_k + h_i\\
(\partial_t - \Delta) |\nabla f|^2 &= 4 f_{ik} f_i f_k + 2 h_i f_i - 2|\nabla^2 f|^2.
\end{align*}
From the above equations, using $\Ric\geq 0$, we deduce
\begin{align*}
(\partial_t - \Delta) F \geq \frac{F}{(t-T_0)} + (t-T_0) &\bigl[ 2|\nabla^2 f|^2 +   2  \langle \nabla \Delta f, \nabla f \rangle + \Delta h\\
&\,\,+ 4(1-\alpha) f_{ik} f_i f_k + 2(1-\alpha) h_i f_i\\
&\,\,- 2(1-\alpha) |\nabla^2 f|^2 + (1-\beta)(\partial_t - \Delta) h \bigr].
\end{align*} 
Notice that 
\begin{equation*}
2 \frac{\langle \nabla F, \nabla f \rangle}{(t-T_0)} = 2  \langle \nabla \Delta f, \nabla f \rangle + 4(1-\alpha) f_{ik} f_i f_k + 2(1-\beta) h_i f_i,
\end{equation*}
which can be plugged above to get
\begin{equation}\label{monot3}
(\partial_t - \Delta) F \geq \frac{F}{(t-T_0)} + 2 \langle \nabla F, \nabla f \rangle +  (t-T_0) \left[ 2 \alpha |\nabla^2 f|^2 + 2(\beta-\alpha) h_i f_i + \partial_t h -\beta(\partial_t - \Delta) h \right].
\end{equation} 
Now we see that
\begin{align*}
n |\nabla^2 f|^2\geq&\, (\Delta f)^2\\
=&\, \Bigl(\frac{F}{(t-T_0)} - (1-\alpha) |\nabla f|^2 - (1-\beta) h\Bigr)^2\\
=&\, \frac{F^2}{(t-T_0)^2} + (1-\alpha)^2 |\nabla f|^4 + (1-\beta)^2 h^2\\
&\,-\frac{2(1-\alpha)}{(t-T_0)} F |\nabla f|^2  -\frac{2(1-\beta)}{(t-T_0)} F h + 2 (1-\alpha)(1-\beta) |\nabla f|^2 h.
\end{align*}
Moreover, by direct computation,
\begin{equation*}
\partial_t h = p(p-1) u^{p-2} \partial_t u,
\end{equation*}
and 
\begin{equation*}
\Delta h = p(p-1) u^{p-2} \Delta u + p(p-1)(p-2) u^{p-3} |\nabla u|^2,
\end{equation*}
whence, using the equation for $u$ and inequality~\eqref{conv1}, we have
\begin{equation}
\begin{split}
\partial_t h - \beta (\partial_t -\Delta) h &= p(p-1) u^{p-2} \partial_t u - \beta p(p-1) u^{2(p-1)} + \beta p(p-1)(p-2) u^{p-3} |\nabla u|^2\\
&\geq \alpha p(p-1) u^{p-3} |\nabla u|^2 + \beta p(p-1)(p-2) u^{p-3} |\nabla u|^2\\
&= p(p-1) [\alpha + \beta(p-2)] u^{p-3} |\nabla u|^2\,.
\end{split}
\end{equation}
Hence, plugging everything in estimate~\eqref{monot3} we finally get
\begin{equation}\label{monot4}
\begin{split}
(\partial_t - \Delta) F \geq&\, \frac{F}{(t-T_0)} + 2 \langle \nabla F, \nabla f \rangle + \frac{2\alpha}{n} \frac{F^2}{(t-T_0)}\\
&\,+ \frac{2 \alpha}{n} (t-T_0) \Bigr[ (1-\alpha)^2 |\nabla f|^4 + (1-\beta)^2 h^2\\
&\,-\frac{2(1-\alpha)}{(t-T_0)} F |\nabla f|^2  -\frac{2(1-\beta)}{(t-T_0)} F h + 2 (1-\alpha)(1-\beta) |\nabla f|^2 h \Bigr]\\
&\,+ (t-T_0) \left[ 2(\beta-\alpha) h_i f_i + p(p-1) [\alpha + \beta(p-2)] u^{p-3} |\nabla u|^2 \right]\\
\geq&\, \frac{F}{(t-T_0)} + 2 \langle \nabla F, \nabla f \rangle + \frac{2\alpha}{n} \frac{F^2}{(t-T_0)}\\
&\,+ \frac{2 \alpha}{n} (t-T_0) \Bigl[ -\frac{2(1-\alpha)}{(t-T_0)} F |\nabla f|^2  -\frac{2(1-\beta)}{(t-T_0)} F h + 4 (1-\alpha)(1-\beta) |\nabla f|^2 h \Bigr]\\
&\,+ (t-T_0) \left[ 2(\beta-\alpha) h_i f_i + p(p-1) [\alpha + \beta(p-2)] u^{p-3} |\nabla u|^2 \right]\,.
\end{split}
\end{equation}
Let us first observe that $\alpha>\beta$ (see the Appendix). We claim that the quantity
\begin{equation*}
Q=-2(\alpha-\beta) h_i f_i + p(p-1) [\alpha + \beta(p-2)] u^{p-3} |\nabla u|^2 + \frac{8\alpha}{n} (1-\alpha)(1-\beta) |\nabla f|^2 h
\end{equation*}
is nonnegative. Notice that
\begin{align*}
2h_i f_i &= 2p(p-1) u^{p-2} u_i f_i = 2p(p-1) \frac{\sqrt{h}}{\sqrt{h}} u^{p-2} u_i f_i \\
&\leq p(p-1) \Bigl( \theta \frac{u^{2p-4}}{p u^{p-1}}|\nabla u|^{2}+\frac{1}{\theta} |\nabla f|^2 h \Bigr)\\
&= p(p-1) \Bigl( \frac{\theta}{p}u^{p-3}|\nabla u|^{2}+\frac{1}{\theta} |\nabla f|^2 h \Bigr)
\end{align*}
for all $\theta>0$. Thus we obtain
\begin{align*}
Q\geq &\,\left\{ p(p-1) [\alpha + \beta(p-2)]-\theta(p-1)(\alpha-\beta)\right\} u^{p-3} |\nabla u|^2 \\
&\,+\Bigr[\frac{8\alpha}{n} (1-\alpha)(1-\beta)-\frac{p(p-1)(\alpha-\beta)}{\theta} \Bigr]  |\nabla f|^2 h.
\end{align*}
Choosing
$$
\theta = \frac{np(p-1)(\alpha-\beta)}{8\alpha(1-\alpha)(1-\beta)}
$$
we get
\begin{equation}\label{eqapp}
Q\geq p(p-1)\Bigl[ \alpha + \beta(p-2)-\frac{n(p-1)(\alpha-\beta)^{2}}{8\alpha(1-\alpha)(1-\beta)}\Bigr]u^{p-3} |\nabla u|^2.
\end{equation}
Using (again) Mathematica\textsuperscript{TM} (see the Appendix), we have that the coefficient in the right hand side is nonnegative, hence $Q\geq 0$, if $n\geq 5$, $\alpha$ and $\beta$ are admissible and $p<\overline{p}_{n}$. Thus, at a minimum point of $F$ with $F\leq 0$, we have
$$
(\partial_t - \Delta) F \geq \frac{F}{(t-T_0)} + \frac{2\alpha}{n} \frac{F^2}{(t-T_0)}
$$
and, by maximum principle,
$$
F\geq -\frac{n}{2\alpha}
$$
which implies
$$
f_t \geq \alpha \vert \nabla f\vert^2+\beta h -\frac{n}{2\alpha (t-T_0)}\,.
$$
Then, recalling that $f= \log u_t$, we finally obtain
\begin{equation}\label{monot6}
\frac{\partial^2_t u}{u_t}\geq \alpha \frac{\vert\nabla u_t \vert^2}{u_t^2}+\beta h -\frac{n}{2\alpha (t-T_0)}.
\end{equation}
In particular, since $\alpha,\beta > 0$, we  have
\begin{equation}\label{monot7}
\frac{\partial^2_t u}{u_t}\geq -\frac{n}{2\alpha (t-T_0)}.
\end{equation}
Letting $T_0\to-\infty$, we conclude that at every $x\in M$ and $t<T$ we have $\partial^2_t u > 0$, for every pair $(\alpha,\beta)$ admissible for $p$. Therefore, we have showed the following proposition.

\begin{prop}\label{prop-conv}
Let $u:M\times(-\infty,T)\to\R$ a classical, positive, ancient solution of the equation $u_t = \Delta u + u^p$ with $p\in(0,\overline{p}_n)$, on an $n$--dimensional, $n\geq 5$, complete Riemannian manifold 
$(M,g)$ with nonnegative Ricci tensor. Then, there exist admissible pairs $\alpha,\beta\in(0,1)$ for $p$ and for any of such pairs there holds
$$
\partial^2_t u\geq \alpha \frac{\vert\nabla u_t \vert^2}{u_t}+\beta p u^{p-1}u_t >0
$$
for every $x\in M$ and $t\in(-\infty,T)$. In particular, for every fixed $x\in M$ the function $t\mapsto u(x,t)$ is convex.
\end{prop}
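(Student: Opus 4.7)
The plan is to mimic the Li~\&~Yau style argument of Section~\ref{liyausem}, but now applied to $v=\partial_t u$ rather than to $u$ itself. Since Proposition~\ref{prop-mon} already guarantees $v>0$, it is natural to set $f=\log v$; differentiating the PDE for $u$ in time gives $(\partial_t-\Delta)v=hv$ with $h=pu^{p-1}$, so $f$ satisfies $(\partial_t-\Delta)f=|\nabla f|^2+h$, formally the same equation that $\log u$ would satisfy with $p$ replaced by~$1$. Accordingly I would introduce
$$
F=(t-T_0)\bigl(\Delta f+(1-\alpha)|\nabla f|^2+(1-\beta)h\bigr),
$$
with $T_0<T$ arbitrary, and aim at a pointwise bound $F\geq -n/(2\alpha)$ via a maximum principle argument on $M\times(T_0,T)$; the noncompact case is reduced to the compact one by the same cut--off trick as in Section~\ref{liyausem}.

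The first computational step is to derive the evolution equation for $F$. The Bochner formula together with $\Ric\geq 0$ produces the usual $2|\nabla^2 f|^2$ good term, while the new ingredients are $\Delta h$ and $\nabla h$, which involve derivatives of $u^{p-1}$. Using the PDE for $u$ and, crucially, inequality~\eqref{conv1} from Proposition~\ref{prop-mon}, these contributions can be rewritten as $p(p-1)[\alpha+\beta(p-2)]u^{p-3}|\nabla u|^2$ plus a mixed term $2(\beta-\alpha)h_i f_i$. After isolating the transport term $2\nabla F\cdot\nabla f$ and using $|\nabla^2 f|^2\geq(\Delta f)^2/n$, one reaches an inequality for $(\partial_t-\Delta)F$ containing an explicit residual quantity $Q$ involving the terms $h_if_i$, $u^{p-3}|\nabla u|^2$, and $|\nabla f|^2 h$.

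The main obstacle is to show $Q\geq 0$ throughout the admissible range. I would bound the only sign--indefinite term $2(\alpha-\beta)h_if_i$ (note $\alpha>\beta$, as verified in the Appendix) by Cauchy--Schwarz with a free parameter $\theta>0$, splitting it between $u^{p-3}|\nabla u|^2$ and $|\nabla f|^2 h$. Optimizing $\theta$ reduces nonnegativity of $Q$ to a single algebraic inequality
$$
\alpha+\beta(p-2)-\frac{n(p-1)(\alpha-\beta)^2}{8\alpha(1-\alpha)(1-\beta)}\geq 0,
$$
whose validity for all admissible pairs $(\alpha,\beta)$ when $n\geq 5$ and $p<\overline{p}_n$ is the real technical heart of the argument and, as for the determination of $\overline{p}_n$, appears to demand symbolic verification.

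Once $Q\geq 0$, at a spatial minimum of $F$ with $F\leq 0$ the evolution collapses to $(\partial_t-\Delta)F\geq F/(t-T_0)+(2\alpha/n)F^2/(t-T_0)$. Hamilton's trick then yields $F_{\min}(t)\geq -n/(2\alpha)$, which, written back in terms of $u$, reads
$$
\partial^2_t u/u_t\geq \alpha|\nabla u_t|^2/u_t^2+\beta h-\frac{n}{2\alpha(t-T_0)}.
$$
Letting $T_0\to-\infty$ annihilates the last term, and since $u_t>0$ by Proposition~\ref{prop-mon} and $\alpha,\beta>0$, the claimed strict convexity follows.
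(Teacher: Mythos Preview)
Your proposal is correct and follows exactly the paper's own argument: the same auxiliary function $F=(t-T_0)\bigl(\Delta f+(1-\alpha)|\nabla f|^2+(1-\beta)h\bigr)$ for $f=\log u_t$, the same use of Proposition~\ref{prop-mon} to estimate $\partial_t h-\beta(\partial_t-\Delta)h$, the same Cauchy--Schwarz splitting of $2(\alpha-\beta)h_if_i$ with an optimized parameter $\theta$ leading to the algebraic condition $\alpha+\beta(p-2)-\frac{n(p-1)(\alpha-\beta)^2}{8\alpha(1-\alpha)(1-\beta)}\geq 0$ (verified symbolically for $n\geq 5$), and the same maximum principle/Hamilton's trick conclusion before sending $T_0\to-\infty$. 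The reduction of the noncompact case to the compact one via the Li--Yau cut--off is also exactly what the paper does.
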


\begin{rem}
If the ambient is $\R^n$ and $u:\mathbb{R}^n\times(-\infty,T)\to\R$ a classical, positive, ancient solution of the equation
$$
u_t = \Delta u + u^p
$$
on $\mathbb{R}^n$, with $p\in(0,\overline{p}_n)$, letting $u_m = \partial_{x_m} u$, $m=1,..,n$, be any spatial derivative of $u$, we observe that 
\begin{equation*}
(\partial_t - \Delta) u_m = h u_m,
\end{equation*}
where $h= p u^{p-1}$, so that
\begin{equation*}
(\partial_t - \Delta) u_m^2 = 2 h u_m^2 - 2 |\nabla u_m|^2.
\end{equation*}
Thus, we consider, for $\eps>0$, the function $f_\eps = \frac12 \log(u_{m}^{2} + \eps)$. By direct computation we see that $f_\eps$ satisfies
\begin{equation*}
(\partial_t - \Delta) f_\eps = (1+\psi_\eps) h + |\nabla f_\eps|^2 (1+\phi_\eps),
\end{equation*}
whenever $u_m \ne 0$, with $\psi_\eps = -\frac{\eps}{u_m^2 + \eps}$ and $\phi_\eps = -\frac{\eps}{u_m^2}$. We introduce 
\begin{equation}\label{fave1}
F_\eps = t\left( \Delta f_\eps + (1-\alpha) |\nabla f_\eps|^2 + (1-\beta) h\right).
\end{equation}
In order to prove that $F_\eps \geq -C$ for some $C>0$, we notice that if $u_m = 0$, then $f_\eps$ achieves a (global) minimum point, so that $\Delta f_\eps \geq 0$, which in turn implies $F_\eps \geq 0$ by this equation. Hence, without loss of generality, we can suppose that $u_m^2 >0$, which implies that the perturbations $\psi_\eps$ and $\phi_\eps$, as well as all of their derivatives, tend to zero uniformly as $\eps \to 0$. In this way, if for $\eps = 0$ we let $f=f_0$ and $F=F_0$, we have that
\begin{equation*}
(\partial_t - \Delta) f =  h + |\nabla f|^2 + R(\eps),
\end{equation*}
with $R(\eps)\to 0$ as $\eps\to 0$. Observe now that $R(\eps)$ is smooth whenever $u_m \neq 0$ and that all of its derivatives tend to zero as $\eps\to 0$. Thus, we can omit the remainder $R(\eps)$ in the computations that we can perform as for the monotonicity of the time derivative $u_t$ proved above, only with the spatial derivative $\partial_{x_m} u$, $m=1,..,n$ in place of $u_t$. Following step by step the preceding computations, we then obtain the following fact.\\

\noindent{\em Let $u:\mathbb{R}^n\times(-\infty,T)\to\R$ a classical, positive, ancient solution of the equation $u_t = \Delta u + u^p$ with $p\in(0,\overline{p}_n)$, on $\mathbb{R}^n$, $n\geq 5$. Then, for every fixed $x\in \mathbb{R}^n$, the function $t\to|\nabla u|^2 (x,t)$ is monotone increasing.}\\

\noindent This could possibly be an initial step for an alternative line to show the Merle and Zaag result in~\cite{merlezaag} of triviality of ancient solutions in $\R^n$.
\end{rem}

\section{Singularities, blow--up and eternal/ancient solutions in $\R^n$}\label{blowupsec}

Let $u:M\times [0,T)\to\R$ a positive smooth solution of $u_t = \Delta u + u^p$ on an $n$--dimensional Riemannian manifold $(M,g)$, with $p>1$, such that $T<+\infty$ is the maximal time of existence. In this section we want to discuss, as in~\cite{pqs2}, the asymptotic behavior of a solution approaching the singular time $T$, by means of a slightly different blow--up technique, borrowed by the work of Hamilton about the Ricci flow~\cite{hamilton9} (and the mean curvature flow too).

Assuming that $M$ is compact and $u$ is uniformly bounded by some constant $A>0$, we have the evolution equations, 
\begin{align*}
\frac{d\,\,}{dt} u^2=&\,2u u_t=2 u\Delta u+2pu^{p+1}=\Delta u^2-2\vert \nabla u\vert^2+2pu^{p+1}
\leq\Delta u^2+2pA^{p+1}\\
\frac{d\,\,}{dt}u_t^2=&\,2u_t\Delta u_t +2pu^{p-1}u_t^2
=\Delta u_t^2-2\vert \nabla u_t\vert^2+2pu^{p-1}u_t^2\leq\Delta u_t^2+2pA^{p-1}u_t^2\\
\frac{d\,\,}{dt}\vert\nabla u\vert^2
=&\,2\nabla u\nabla\Delta u+2p\vert\nabla u\vert^2u^{p-1}\\
=&\,2\nabla u\Delta \nabla u-2\Ric(\nabla u, \nabla u)+2p\vert\nabla u\vert^2u^{p-1}\\
=&\,\Delta\vert\nabla u\vert^2-2\vert D^2u\vert^2-2\Ric(\nabla u, \nabla u)+2p\vert\nabla u\vert^2u^{p-1}\\
\leq&\,\Delta\vert\nabla u\vert^2+C\vert\nabla u\vert^2\\
\end{align*}
which, by means of maximum principle, imply that $u_t$  is also uniformly bounded (and $\vert\nabla u\vert^2$ too), then, for every $x\in M$, the limit
$$
u_T(x)=\lim_{t\to T}u(x,t)
$$
exists and it is Lipschitz. Repeating the same (standard) argument for $\vert\nabla^k u\vert^2$, for every $k\in\NN$, one can conclude that the limit map $u_T$ is smooth (and the convergence $u(\cdot,t)\to u_T$ also), hence we can ``restart'' the solution by standard methods (see~\cite{mantmart1}, for instance), getting a smooth solution in a larger time interval, in contradiction with the assumption that $T<+\infty$ was the maximal time of existence.

From this discussion, we conclude that 
$$
\limsup_{t\to T}u_{\max}(t)=\limsup_{t\to T}\max_{x\in M}u(x,t)=+\infty,
$$
moreover, such function $u_{\max}:[0,T)\to\R$, which is locally Lipschitz, by maximum principle must satisfy distributionally (see~\cite[Section~2.1]{Manlib}, for instance)
$$
u'_{\max}(t)\leq u_{\max}^p(t)
$$
implying (after integration of this differential inequality) the estimate
\begin{equation}\label{maxestim0}
u_{\max}(t)\geq\frac{1}{\bigl[(p-1)(T-t)\bigr]^{\frac{1}{p-1}}}=\overline{u}(t)
\end{equation}
which is the unique nonzero solution of the ODE $u'=u^p$, hence the unique nonzero trivial ancient solution of $u_t=\Delta u+u^p$, defined in the maximal interval $(-\infty,T)$.\\
Notice, that by the same argument, applied to $u_{\min}(t)=\min_{x\in M}u(x,t)$, satisfying $u'_{\min}(t)\geq u_{\min}^p(t)$, excludes the possibility that $T=+\infty$, if $u$ is not identically zero, that is, there are no positive immortal solutions, if $M$ is compact.

Then, we know that the following superior limit, in the case $M$ is compact,
\begin{equation}
\limsup_{t\to T}\max_{x\in M} u(x,t)^{p-1}({T-t})
\end{equation}
must be at least $1/(p-1)$. Up to our knowledge, we do not know if the same estimate holds for {\em complete--only} manifold $M$ (possibly with bounded geometry).\\
Assume that 
\begin{equation}\label{hyp}
\limsup_{t\to T}\max_{x\in M} u(x,t)^{p-1}({T-t})=+\infty\,.
\end{equation}
Then, let us choose a sequence of times $t_k\in[0,T-1/k]$ and points $x_k\in M$
such that
\begin{equation}\label{rateIIbis}
u(x_k,t_k)^{p-1}(T-1/k-t_k)=\max_{\genfrac{}{}{0pt}{}
{t\in[0,T-1/k]}{x\in M}}u(x,t)^{p-1}(T-1/k-t)\,.
\end{equation}
This maximum goes to $+\infty$ as $k\to\infty$, indeed, if it is
bounded by some constant $C$ on a subsequence $k_i\to\infty$, then for every $t\in[0,T)$ we have that definitely $t\in[0,T-1/k_i]$ and
\begin{equation*}
u(x,t)^{p-1}(T-t)=\lim_{i\to\infty}u(x,t)^{p-1}(T-1/k_i-t)\leq C
\end{equation*}
for every $x\in M$. This is in contradiction with the hypothesis~\eqref{hyp}.\\
This fact also forces the sequence $t_k$ to converge to $T$ as
$k\to\infty$. Indeed, if $t_{k_i}$ is a subsequence not converging to $T$, we
would have that the sequence $u(x_{k_i},t_{k_i})^{p-1}$ is
bounded, hence also
$$
\max_{\genfrac{}{}{0pt}{}
{t\in[0,T-1/{k_i}]}{x\in M}}u(x,t)^{p-1}(T-1/{k_i}-t)
$$
would be bounded.\\
Thus, we can choose an increasing (not relabeled) subsequence $t_k$
converging to $T$, such that $u(x_k,t_k)$ goes monotonically to
$+\infty$ and
$$
u(x_k,t_k)^{p-1}t_k\to+\infty\,,\qquad u(x_k,t_k)^{p-1}(T-1/k-t_k)\to+\infty\,,
$$
Moreover, we can also assume that $x_k\to \overline{x}$ for some $\overline{x}\in M$.\\
We rescale now {\em the Riemannian manifold $(M,g)$ and the solution $u$} as follows: for every $k\in\NN$, we consider on $M$ the rescaled metric $g_k=u^p(x_k,t_k)g$ and the function
$u_k: M\times I_k\to\Ri$, where
$$
I_k=\bigl[-u(x_k,t_k)^{p-1}t_k,u(x_k,t_k)^{p-1}(T-1/k-t_k)\bigr]\,,
$$
and $u_k$ is the function given by
$$
u_k(x,s)=\frac{u\bigl(x,s/u(x_k,t_k)^{p-1}+t_k\bigr)}{u(x_k,t_k)}\,.
$$
Then, there holds
\begin{align*}
\frac{\partial\,}{\partial s} u_k(x,s)=&\,\frac{u_t\bigl(x,s/u(x_k,t_k)^{p-1}+t_k\bigr)}{u^p(x_k,t_k)}\\
=&\,\frac{\Delta u\bigl(x,s/u(x_k,t_k)^{p-1}+t_k\bigr)}{u^p(x_k,t_k)}+
\frac{u^p\bigl(x,s/u(x_k,t_k)^{p-1}+t_k\bigr)}{u^p(x_k,t_k)}\\
=&\,\Delta_k u_k(x,s)+u_k^p(x,s)
\end{align*}
where $\Delta_k$ is the Laplacian associated to the Riemannian manifold $(M,g_k)$.\\
Hence, every $u_k$ is still a solution of our equation on $(M,g_k)$ and on a different time interval $I_k$. Moreover the following
properties hold,
\begin{itemize}
\item $u_k(x_k,0)=1$,
\item for every $\varepsilon>0$ and $\omega>0$ there exists $\overline{k}\in\NN$ such that 
\begin{equation}\label{ccc}
\max_{x\in M} u_k(x,s)\leq 1+\varepsilon
\end{equation}
for every $k\geq \overline{k}$ and $s\in[-u(x_k,t_k)^2t_k,\omega]$,
\end{itemize}
indeed (the first point is immediate), by the choice of the minimizing pairs 
$(x_k,t_k)$ we get
\begin{align*}
u_k(x,s)&\,=\frac{u(x,s/u(x_k,t_k)^{p-1}+t_k)}{u(x_k,t_k)}\\
&\,\leq \frac{u(x_k,t_k)}{u(x_k,t_k)}\,\frac{T-1/k-t_k}{T-1/k-t_k-s/u(x_k,t_k)^{p-1}}\\
&\,= \frac{T-1/k-t_k}{(T-1/k-t_k)-s}\,,
\end{align*}
if
$$
\frac{s}{u(x_k,t_t)^{p-1}}+t_k\in[0,T-1/k],
$$
that is, if $s\in I_k$. Then, assuming $s\leq\omega$ and $k$ large enough,  
the claim follows as we know that  $u(x_k,t_k)^{p-1}(T-1/k-t_k)\to+\infty$.

If we now take a (subsequential) limit of these {\em pairs} manifold--solution, clearly the manifolds $(M,g_k)$ under the {\em pointed convergence} in $x_k$ (see~\cite{petersen2}, for instance) converge to $\R^n$ with its flat metric and the solutions $u_k$ converge smoothly in every compact time interval of $\R$ (by standard local uniform parabolic estimates -- similar to the ones at the beginning of the section, for instance) to a smooth eternal solution $u_\infty:\R^n\times\R\to\R$, bounded with all its derivatives of the same semilinear equation $u_t=\Delta u+u^p$, notice indeed that the time interval of existence is
the whole $\R$, as $\lim_{k\to\infty} I_k=(-\infty,+\infty)$. Moreover, the function $u_\infty$ takes its absolute maximum, which is 1, at time $s=0$ at the origin of $\R^n$, hence the limit flow is nonzero.

Since we know that for $p<p_S=\frac{n+2}{n-2}$, by the works of Quittner~\cite{quittner,quittner2}, such eternal solutions in $\R^n$ do not exist, we can conclude that, for such range of exponents, 
\begin{equation}\label{hyp2}
\limsup_{t\to T}\max_{x\in M}u(x,t)^{p-1}({T-t})=C<+\infty\,.
\end{equation}
Hence, there exists a constant $\overline{C}$ such that
$$
u(x,t)\leq\frac{\overline{C}}{(T-t)^{\frac{1}{p-1}}}
$$
for every $x\in M$ and $t\in[0,T)$.\\
In such situations, let us choose a sequence of times $t_k\nearrow$ and points $x_k\in M$ such that
\begin{equation}
u(x_k,t_k)=\max_{x\in M}u(x,t_k)\qquad\qquad\text{ and }\qquad\qquad u(x_k,t_k)^{p-1}({T-t_k})\to C
\end{equation}
(we can also assume that $x_k\to \overline{x}$, for some $\overline{x}\in M$). Repeating the above blow--up procedure, that is,
considering on $M$ the rescaled metric $g_k=u^p(x_k,t_k)g$ and the function
$$
u_k(x,s)=\frac{u\bigl(x,s/u(x_k,t_k)^{p-1}+t_k\bigr)}{u(x_k,t_k)},
$$
where
$$
I_k=\bigl[-u(x_k,t_k)^{p-1}t_k,u(x_k,t_k)^{p-1}(T-t_k)\bigr)\,,
$$
we get this time as a limit an ancient smooth solution $u_\infty:\R^n\times\R\to\R$ of $u_t=\Delta u+u^p$, defined in the time interval $(-\infty,C)=\lim_{k\to\infty} I_k$. As $u_k(x_k,0)=\max_{x\in M}u_k(x,0)=1$, we have $u_\infty(0,0)=\max_{x\in\R^n}u_\infty(x,0)=1$ and moreover, since by estimate~\eqref{maxestim0} we have
$$
\max_{x\in M}u_k(x,s)^{p-1}\geq\frac{1}{p-1}\,\frac{1}{u(x_k,t_k)^{p-1}(T-t_k) - s}
$$
passing to the limit, we conclude
$$
\max_{x\in\R^n}u_\infty(x,s)^{p-1}\geq\frac{1}{p-1}\frac{1}{C-s},
$$
since
\begin{equation}
u(x_k,t_k)^{p-1}({T-t_k})\to C\,.
\end{equation}
This clearly shows that the interval $(-\infty,C)$ is maximal for $u_\infty$ and $u_\infty$ is nonzero.\\
If now we consider any $\varepsilon>0$, there exists $\overline t\in[0,T)$ such that
\begin{equation}
u(x,t)^{p-1}({T-t})\leq C+\varepsilon
\end{equation}
for every $t\in(\overline{t},T)$ and $x\in M$, hence
\begin{align*}
u_k(x,s)^{p-1}&\,=\frac{u(x,s/u(x_k,t_k)^{p-1}+t_k)^{p-1}}{u(x_k,t_k)^{p-1}}\\
&\,\leq \frac{C+\varepsilon}{u(x_k,t_k)^{p-1}(T-s/u(x_k,t_k)^{p-1}-t_k)}\\
&\,= \frac{C+\varepsilon}{u(x_k,t_k)^{p-1}(T-t_k)-s}\,,
\end{align*}
if $s/u(x_k,t_t)^{p-1}+t_k\in(\overline{t},T)$, that is, if 
$$
s\in\bigl( (\overline{t}-t_k)u(x_k,t_t)^{p-1},(T-t_k)u(x_k,t_t)^{p-1}\bigr)\,.
$$
Passing to the limit, we conclude that for every $s\in(-\infty,C)$ and $x\in\R^n$, there holds
$$
u_\infty(x,s)^{p-1}\leq \frac{C+\varepsilon}{C-s},
$$
hence, by the arbitrariness of $\varepsilon$
$$
u_\infty(x,s)^{p-1}\leq \frac{C}{C-s},
$$
for every $x\in\R^n$ and $s\in(-\infty,C)$. Notice that equality holds at $x=0$ and $s=0$.\\
Then, we have
$$
\frac{1}{p-1}\frac{1}{C-s}\leq\max_{x\in M}u_\infty(x,s)^{p-1}\leq\frac{C}{C-s}
$$
where the second inequality is an equality at $s=0$.\\
By means of the result of Merle and Zaag~\cite{merlezaag}, this solution $u_\infty$ is actually trivial (that is, constant in space), hence 
$$
u_\infty(x,s)=\frac{1}{\bigl[(p-1)(C-s)\bigr]^{\frac{1}{p-1}}}\,.
$$
Being $u_\infty(0,0)=1$, it follows that $C=\frac{1}{p-1}$, hence
$$
u_\infty(x,s)=\frac{1}{\bigl[1-(p-1)s\bigr]^{\frac{1}{p-1}}},
$$
defined on $(-\infty,1/(p-1))$, moreover
\begin{equation*}
\limsup_{t\to T}\max_{x\in M}u(x,t)^{p-1}({T-t})=\frac{1}{p-1}\,.
\end{equation*}

This clearly gives a ``universal'' asymptotic profile at a singularity of any solution.

\newpage

\appendix
\section*{Appendix}

With the next computation with Mathematica\textsuperscript{TM}, we find the maximal exponent $\overline{p}_n>1$, given in Definition~\ref{pienne} such that there exists admissible constants $\alpha$ and $\beta$. Moreover, we also check that $\overline{p}_n$ is smaller than $\frac{n(n+2)}{(n-1)^2}$ and larger than $\frac{n}{n-2}$, for every $n\geq4$.

\ 

\hspace{-3em}{\includegraphics[scale=0.62]{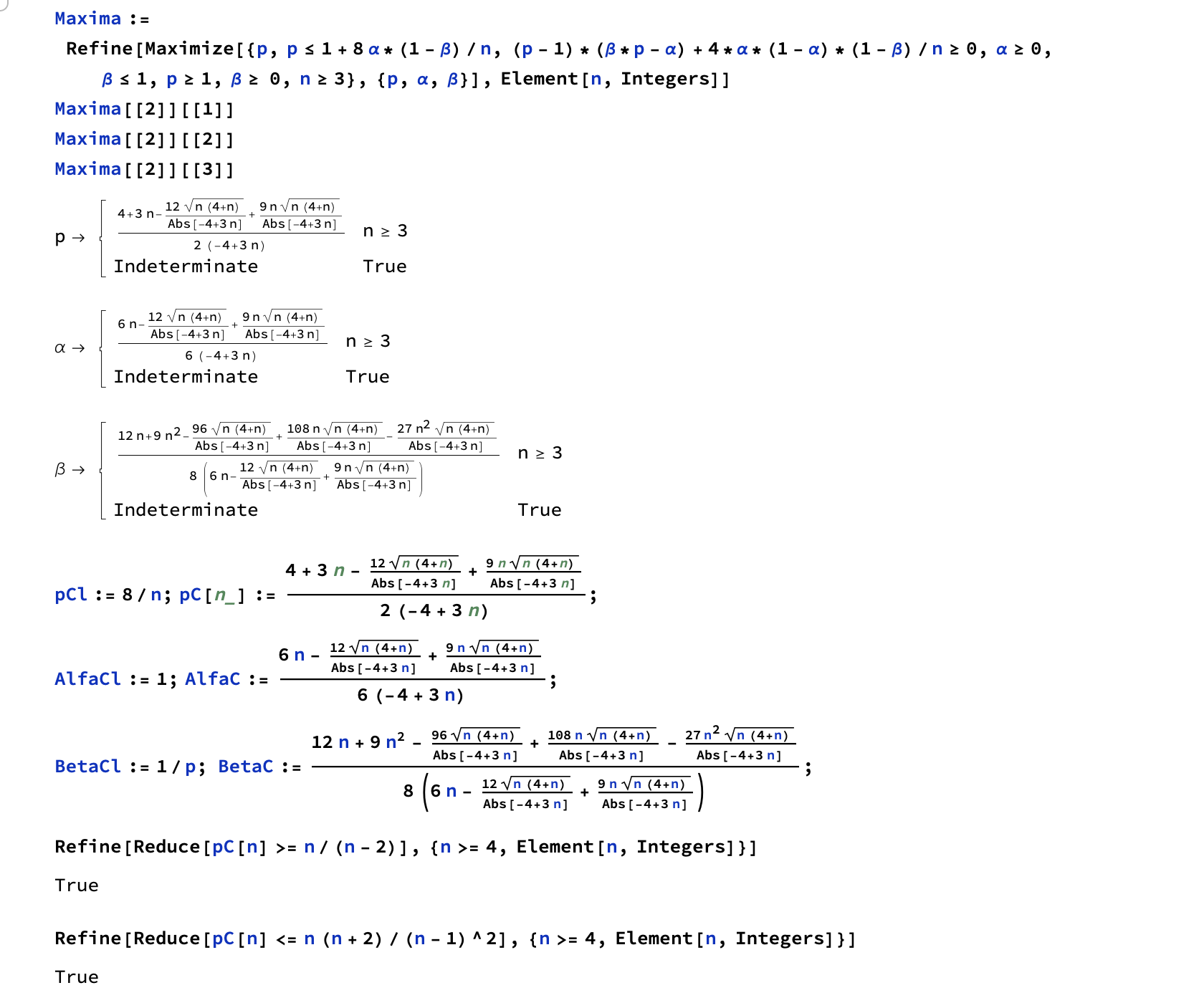}}

\eject

Here instead, we show that the previously found constants satisfy $\alpha>\beta$ and that the coefficient in the right hand side of equation~\eqref{eqapp} is nonnegative.

\ 

\hspace{-1.5em}{\includegraphics[scale=0.56]{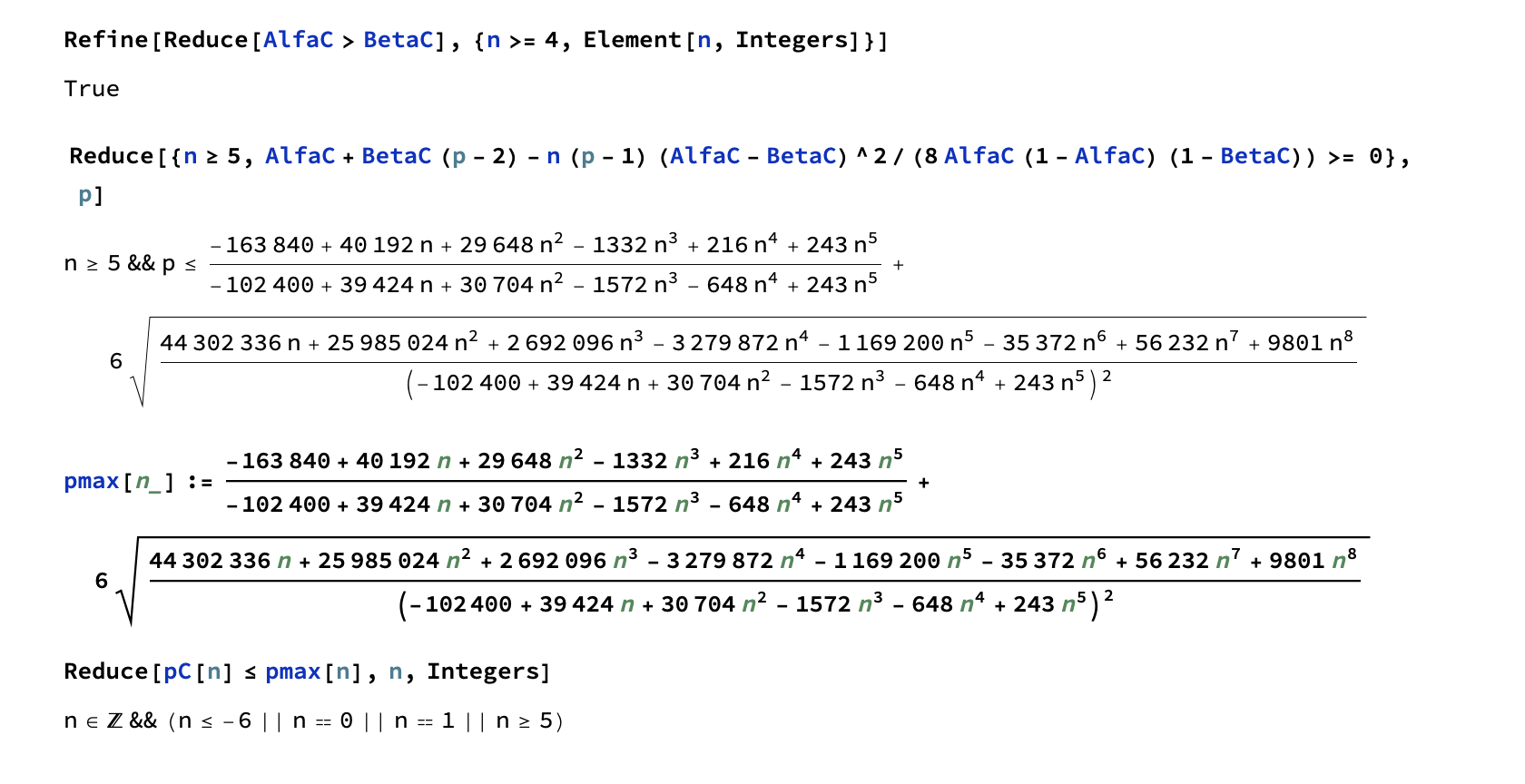}}

\

\bibliographystyle{amsplain}
\bibliography{biblio}

\end{document}